\newtheorem{lemma}{Lemma}
\newtheorem{corollary}[lemma]{Corollary}
\newtheorem{theorem}[lemma]{Theorem}
\newtheorem{definition}[]{Definition}
\newtheorem{problem}[]{Problem}
\begin{document}

\begin{frontmatter}



\title{Metric dimension and edge metric dimension of unicyclic graphs }


\author[label1]{Enqiang Zhu}

\author[label1]{Shaoxiang Peng}

\author[label2]{Chanjuan Liu\footnote{corresponding author:chanjuanliu@dlut.edu.cn}}

\address[label1]{Institute of Computing Science and Technology, Guangzhou University, Guangzhou 510006, China}

\address[label2]{School of Computer Science and Technology, Dalian University of Technology, Dalian 116024, China}

\begin{abstract}
The metric (resp. edge metric) dimension of a simple connected graph $G$, denoted by dim$(G)$ (resp. edim$(G)$), is the cardinality of a smallest vertex subset $S\subseteq V(G)$ for which every two distinct vertices (resp.  edges) in $G$ have distinct distances to a vertex of $S$.
It is an interesting topic to discuss the relation between  dim$(G)$ and edim$(G)$ for some class of graphs $G$.
In this paper, we settle two open problems on this topic for a widely studied class of graphs, called unicyclic graphs. Specifically,  we introduce four classes of subgraphs to characterize the structure of a unicyclic graph whose metric (resp . edge metric) dimension is  equal to the lower bound on this invariant for unicyclic graphs. Based on this,
we  determine the exact values of dim$(G)$ and edim$(G)$ for all unicyclic graphs $G$.

\end{abstract}




\begin{keyword}


Metric dimension \sep Edge metric dimension \sep Unicyclic graphs

\end{keyword}

\end{frontmatter}



\section{Introduction}
Graphs considered here should be finite, simple, and undirected.  We use $V(G)$ and $E(G)$ to denote the \emph{vertex set} and \emph{edge set} of a graph $G$, respectively.  The \emph{degree} of a vertex $v\in V(G)$ in $G$, denoted by $d_G(v)$, is the number of vertices adjacent to $v$. If a vertex has degree $k$ (resp. at least $k$) in $G$, then we call it a $k$-vertex (resp. $k^{+}$-vertex) of $G$.  
The \emph{distance} between  two vertices $u,v\in V(G)$, denoted by $d_G(u,v)$, is the length of a shortest path from $u$ to $v$, and the \emph{distance} between  a vertex $u\in V(G)$ and an edge $e\in E(G)$, denoted by $d_G(u, e)$, is defined as $d_G(u, e)=\min \{d_G(u,v_1), d_G(u,v_2)\}$, where $e=v_1v_2$.  A \emph{path} from a vertex $u$ to an edge $e$ refers to a path from $u$ to an arbitrary endpoint of $e$. Throughout this paper, the notation $P_{u,v}$ (resp. $P_{u,e}$) is used to denote the shortest path from vertex $u$ to vertex $v$ (resp. vertex $u$ to edge $e$).
Let $G$ be a connected graph and $S\subseteq V(G)$. For any $u,u'\in V(G)$ (resp.  $e,e' \in E(G)$),  we say that $S$ \emph{distinguishes} $u$ and $u'$ (resp. $e$ and $e'$)  if there exists a vertex $v\in S$ such that  $d_G(v,u)\neq d_G(v,u')$ (resp. $d_G(v,e)\neq d_G(v,e')$); it is also said that $v$ distinguishes  $u,u'$ (resp. $e,e'$).
If every pair of vertices (resp. edges) of $G$ can be distinguished by  $S$, then we call $S$  a \emph{metric generator} (resp. \emph{an edge metric generator}) of $G$. The minimum $k$ for which $G$ has a metric (resp. edge metric) generator of cardinality $k$ is called the \emph{metric} (resp.\emph{edge metric}) \emph{dimension} of $G$, denoted by dim($G$) (resp. edim($G$)).

One of the first people to define metric dimension was Slater \cite{slater1975leaves}, who used the term `location number' originally in connection with a location problem of uniquely determining the position of an intruder in a network.   This concept was  independently discovered by Harary and Melter \cite{Harary1976}, who was the first to use the term `metric dimension'.  Metric dimension is an important concept in graph theory,
and has many applications in diverse areas, such as  image processing and pattern recognition \cite{melter1984metric}, robot navigation \cite{khuller1996landmarks}, and image processing and pattern recognition \cite{caceres2007metric},  to name a few. It has been shown that determining metric dimension is \textbf{NP}-complete for planar graphs of maximum degree 6 \cite{diaz2017complexity} and split graphs (or interval graphs, or permutation graphs) of diameter 2 \cite{foucaud2017identification}.
Recently, motivated by an observation that there exist graphs where any metric generator does not distinguish all pairs of edges, Kelenc, et al. \cite{kelenc2018uniquely} proposed the concept of  \emph{edge metric generator} which aims to distinguish edges instead of vertices, and made a comparison between these two dimensions.  They also proved that determining the
edge metric dimension  is \textbf{NP}-hard, and proposed some open problems, most of which have been settled subsequently \cite{zubrilina2018edge, zhu2019graphs}. For more information on edge metric dimension, please refer to \cite{filipovic2019edge,peterin2020edge,zhang2020edge}.

By the similarity of the metric dimension and the edge metric dimension, it is interesting to study the relation between them. In \cite{kelenc2018uniquely}, it was proved that there exist several families of graphs $G$ satisfying dim($G$)$<$ edim($G$), dim($G$)$=$ edim($G$), or dim($G$)$>$ edim($G$). In \cite{zubrilina2018edge}, it was proved that $\frac{\mathrm{dim}(G)}{\mathrm{edim}(G)}$ is not bounded from above. More recently, Knor et al. \cite{knor2021graphs}
further studied the ratio $\frac{\mathrm{dim}(G)}{\mathrm{edim}(G)}$, and by constructing  graphs based on a family of special unicyclic graphs they proved that there are graphs $G$ for which both $\mathrm{dim}(G)-\mathrm{edim}(G)$ and $\mathrm{edim}(G)-\mathrm{dim}(G)$ can be arbitrary large. They also posed two problems on how to characterize unicyclic graphs $G$ with $\mathrm{edim}(G)<\mathrm{dim}(G)$ or $\mathrm{edim}(G)=\mathrm{dim}(G)-1$.

\begin{problem}\cite{knor2021graphs}\label{pro1}
Characterize the class of unicyclic graphs $G$ for which $\mathrm{edim}(G)<\mathrm{dim}(G)$ (resp. $\mathrm{edim}(G)=\mathrm{dim}(G)-1$.
\end{problem}
\noindent Sedlar and \v{S}krekovski \cite{sedlar2021} gave the bounds on $\mathrm{dim}(G)$ and $\mathrm{edim}(G)$ for unicyclic graphs $G$, and proved that $|\mathrm{dim}(G)-\mathrm{edim}(G)|\leq 1$. Furthermore, they proposed a similar problem as Problem \ref{pro1}.

\begin{problem}\cite{sedlar2021} \label{pro2}
For a unicyclic graph $G$, determine when the difference  $\mathrm{dim}(G)-\mathrm{edim}(G)=1,0,$ or $-1$.
\end{problem}

In this paper, by introducing four classes of subgraphs, we determine the exact values of dim$(G)$ and edim$(G)$ for unicyclic graphs $G$. Our results settle the above two problems. The section below describes terminologies and notations we will adopt in our proofs and some (known and new) conclusions.

\section{Preliminaries}
We follow the same notations in \cite{sedlar2021} with a slight change. Given a graph $G$, we use $G-V'$ and $G-E'$ to denote the resulting graph obtained from $G$ by deleting all vertices in $V'$ (and all of theirs incident edges) and all edges in $E'$, respectively, where $V'\subseteq V$ and $E'\subseteq E(G)$. Especially, when $V'=\{v\}$ and $E'=\{e\}$, we replace them by $G-v$ and $G-e$, respectively. For two integers $i, j$ such that $0\leq i<j$, we use $[i,j]$ to denote the set $\{i, i+1, \ldots, j\}$.
The focus of this paper is on unicyclic graphs, which are graphs  containing only one cycle. Let $G$ be a unicyclic graph.  For the sake of convenience, we use $\mathcal{C}(G)$ and $\ell(G)$ (or simply $\mathcal{C}$ and $\ell$ when $G$ is clear from the context) to denote the unique cycle of $G$ and the  length of $\mathcal{C}$, respectively, and denote by $\mathcal{O}(\ell)$ and $\mathcal{E}(\ell)$  the sets of unicyclic graphs with $\ell  \equiv 1$ (mod 2) and $\ell \equiv 0$ (mod 2), respectively. Clearly, $\ell\geq 3$.

In what follows, for any unicyclic graph $G$, we  always let $\mathcal{C}=a_0a_1\ldots a_{\ell-1}a_0$.
Note that $G-E(\mathcal{C})$ is a forest. For each $i\in [0,\ell-1]$, we denote by $T_{a_i}$ the component of $G-E(\mathcal{C})$ containing $a_i$, and call $a_i$ the \emph{master} of every vertex in $V(T_{a_i})$ (it is necessary here to clarify that $a_i$ is also its own master).
For any $S\subseteq V(G)$, we say that $a_i, i\in [0,\ell-1]$ is \emph{$S$-active} if $T_{a_i}$ contains a vertex of $S$, and use $\mathcal{A}(S)$ to denote the set of all $S$-active vertices.
A vertex $v\in V(G)$ is called a \emph{branching vertex} if either $v\in V(\mathcal{C})$ and $d_G(v)\geq 4$ or $v\notin V(\mathcal{C})$ and $d_G(v)\geq 3$. Clearly, if $T_{a_i}$ contains no branching vertex, then $T_{a_i}$ is a path.
For every vertex $v$ with $d_G(v)\geq 3$, a \emph{thread} attached to $v$ is a path $v_1\ldots v_k$ such that $v_i$ is a 2-vertex for $i\in[1, k-1]$, $v_k$ is an 1-vertex, and $v_1v\in E(G)$, where $k\geq 1$.
Let us denote by $\mathcal{S}(v)$ the set of all threads attached to $v$. A $4^+$-vertex $v \in V(\mathcal{C})$ is called a \emph{bad} vertex of $G$ if there is a thread attached to $v$. A \emph{branch-resolving set} of  $G$ is a set $S\subseteq V(G)$ such that for every $3^+$-vertex  $v$,  at least $|\mathcal{S}(v)|-1$ threads in $\mathcal{S}(v)$ contain a vertex of $S$. Indeed, we are interested in the branch-resolving set containing the minimum number of vertices. Obviously, the cardinality of any minimum branch-resolving set, denoted by $L(G)$, is determined by
\begin{equation*}
 L(G)= \sum\limits_{v\in V(G), |\mathcal{S}(v)|>1} (|\mathcal{S}(v)|-1).
\end{equation*}
Observe that each thread has exactly one 1-vertex; therefore, we can choose a minimum branch-resolving set which consists of only 1-vertices. We use $\mathcal{B}(G)$ to denote the set of all such  minimum branch-resolving sets of $G$. When $S (\in \mathcal{B}(G)) \neq \emptyset$, we can label vertices  $a_0,a_1,\ldots, a_{\ell-1}$  in a specific way: $a_0 \in \mathcal{A}(S)$ and the maximum $i$ such that $a_i$ is an $S$-active vertex should be as small as possible. Such a labeling is called a \emph{normal labeling with respect to $S$} of $\mathcal{C}$ (or simply a \emph{normal labeling} of $\mathcal{C}$).

Let $G$ be a unicyclic graph and $a_i,a_j,a_k$ be three vertices on $\mathcal{C}$. If $d_G(a_i,a_j)+d_G(a_j,a_k)+d_G(a_k,a_i)= \ell$, then $a_i,a_j,a_k$ are said to form a geodesic triple. Let $S\in \mathcal{B}(G)$ and $r = \max \{ i | a_i$ is $S$-active\}.
Notice that when $\mathcal{C}$ is labelled normally, if $r>\lfloor\frac{\ell}{2}\rfloor$, then there must exist three $S$-active vertices forming  a geodesic triple; in addition, if $\ell\equiv 0$ (mod 2) and $r=\lfloor\frac{\ell}{2}\rfloor$, then either $\mathcal{A}(S) = \{a_0, a_{r}\}$ or there exist three $S$-active vertices forming  a geodesic triple.

The following part of this section moves on to describe some basic results,  which are helpful to prove our main conclusions  in the subsequent sections. We first present three known results given by Sedlar and \v{S}krekovski \cite{sedlar2021}.



\begin{lemma} \cite{sedlar2021} \label{necessary-0}
Let $G$ be a unicyclic graph. Then, every metric generator (resp. edge metric generator) $S$ is a branch-resolving set  with $|\mathcal{A}(S)|\geq 2$.
\end{lemma}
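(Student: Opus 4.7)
The plan is to prove both parts by contradiction: assume $S$ violates the stated condition, then exhibit a pair of vertices (for the metric case) or edges (for the edge-metric case) that $S$ fails to distinguish.

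First, suppose $S$ is a generator that is not branch-resolving, so some $3^+$-vertex $v$ has two threads $P_1 = v_1^{(1)}\ldots v_{k_1}^{(1)}$ and $P_2 = v_1^{(2)}\ldots v_{k_2}^{(2)}$ attached to it with $S\cap (V(P_1)\cup V(P_2)) = \emptyset$. The structural point I would exploit is that each $P_i$ is a pendant path hanging off $v$: every walk from a vertex outside $P_i$ to a vertex inside $P_i$ must traverse the edge $vv_1^{(i)}$. Hence for every $u\in S$ I get $d_G(u,v_1^{(i)}) = d_G(u,v)+1$ for $i=1,2$, so $S$ does not distinguish $v_1^{(1)}$ and $v_1^{(2)}$. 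For the edge-metric case the same gateway observation gives $d_G(u,vv_1^{(i)}) = \min\{d_G(u,v), d_G(u,v_1^{(i)})\} = d_G(u,v)$, so $S$ does not distinguish the edges $vv_1^{(1)}$ and $vv_1^{(2)}$. Either conclusion contradicts $S$ being a generator.

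For the second part, I would establish $|\mathcal{A}(S)|\geq 2$ by ruling out $|\mathcal{A}(S)|\leq 1$. The case $\mathcal{A}(S)=\emptyset$ forces $S=\emptyset$, which distinguishes nothing. In the remaining case $\mathcal{A}(S)=\{a_0\}$, so $S\subseteq V(T_{a_0})$, and I would compare the two cycle neighbours $a_1$ and $a_{\ell-1}$ of $a_0$, which are distinct because $\ell\geq 3$. Any shortest path from $u\in V(T_{a_0})$ to $a_1$ must exit $T_{a_0}$ through $a_0$ and then take either the edge $a_0a_1$ of length $1$ or the complementary arc of length $\ell-1\geq 2$; hence $d_G(u,a_1) = d_G(u,a_0)+1$, and symmetrically $d_G(u,a_{\ell-1}) = d_G(u,a_0)+1$, showing that $a_1$ and $a_{\ell-1}$ are indistinguishable by $S$. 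The identical computation gives $d_G(u,a_0a_1) = d_G(u,a_0a_{\ell-1}) = d_G(u,a_0)$, handling the edge-metric case.

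The argument is a short double case analysis and nothing deep is required. The only point that deserves care is the gateway observation: that a shortest path from a vertex of $S$ into a thread (respectively, into the cycle neighbourhood of $a_0$) must enter through the unique attachment vertex $v$ (respectively, $a_0$), so the pendant/tree portion contributes a clean additive $+1$ and cannot be short-circuited via the other arc of $\mathcal{C}$. This is precisely where unicyclicity is used, since it guarantees that each $T_{a_i}$ meets $\mathcal{C}$ at a single vertex, together with the fact $\ell\geq 3$ which forces the alternative arc to be strictly longer.
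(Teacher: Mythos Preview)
The paper does not supply its own proof of this lemma: it is quoted from \cite{sedlar2021} and stated without argument. Your proposal is a correct and self-contained proof. The two contradiction cases (failure of branch-resolving, and $|\mathcal{A}(S)|\le 1$) are exactly the right split, and in each you correctly exploit the cut-vertex property of the attachment point ($v$ for a thread, $a_0$ for the tree $T_{a_0}$) to force equal distances to the two candidate vertices or edges; the use of $\ell\ge 3$ to ensure the alternative arc is strictly longer is the right detail to note.
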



\begin{lemma} \label{geodesic-triple}\cite{sedlar2021}
Let $G$ be a unicyclic graph. If $S\subseteq V(G)$ is a branch-resolving set of $G$ for which there are three $S$-active vertices forming a geodesic triple,  then $S$ is both a metric generator and an edge metric generator of $G$.
\end{lemma}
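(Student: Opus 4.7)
The plan is to prove simultaneously that $S$ distinguishes every pair of distinct vertices and every pair of distinct edges of $G$. The unifying observation is a distance decomposition: for any $s \in V(T_{a_k})$ and any vertex $x$ whose master is $a_i$, a shortest $s$--$x$ walk leaves $T_{a_k}$ through $a_k$, follows a shortest arc of $\mathcal{C}$ from $a_k$ to $a_i$, and then descends through $T_{a_i}$ to $x$; hence $d_G(s,x) = d_{T_{a_k}}(s,a_k) + d_{\mathcal{C}}(a_k,a_i) + d_{T_{a_i}}(a_i,x)$, with the obvious adjustment when $s$ and $x$ share a master. The analogue for an edge $e$ follows by taking the minimum over its two endpoints.

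I would split the argument according to whether the two targets share a common master. In the \emph{same-master} case, the cycle contribution cancels, so everything reduces to a tree problem inside $T_{a_i}$. Provided $S \cap V(T_{a_i}) \neq \emptyset$, the branch-resolving property places a vertex of $S$ on all but one thread at every branching vertex of $T_{a_i}$, which is exactly enough to separate any two targets of $T_{a_i}$. When $T_{a_i}$ contains no vertex of $S$, the branch-resolving condition forces $T_{a_i}$ to be sufficiently simple (essentially a single path hanging at $a_i$) that any $s$ lying in another $T_{a_k}$ already distinguishes the two targets through the internal $T_{a_i}$ portion of the decomposed distance.

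In the \emph{different-master} case, let the two targets have distinct masters $a_i$ and $a_j$. For any $s \in V(T_{a_k}) \cap S$, the difference $d_G(s,x) - d_G(s,x')$ equals $d_{\mathcal{C}}(a_k,a_i) - d_{\mathcal{C}}(a_k,a_j)$ plus a constant depending only on the targets. Thus $S$ fails to distinguish them iff this cycle-difference takes the same value at every $a_k \in \mathcal{A}(S)$. The hypothesis supplies three $S$-active vertices $a_{k_1}, a_{k_2}, a_{k_3}$ forming a geodesic triple, so the three arcs they cut out together cover $\mathcal{C}$; I would track how the function $a_k \mapsto d_{\mathcal{C}}(a_k,a_i) - d_{\mathcal{C}}(a_k,a_j)$ evolves as one moves around $\mathcal{C}$ (it shifts by $0$ or $\pm 2$ per step, with flat behaviour near antipodes when $\ell$ is even) and argue that the identity $d_{\mathcal{C}}(a_{k_1},a_{k_2}) + d_{\mathcal{C}}(a_{k_2},a_{k_3}) + d_{\mathcal{C}}(a_{k_3},a_{k_1}) = \ell$ prevents this function from being constant on $\{a_{k_1}, a_{k_2}, a_{k_3}\}$ for any two distinct cycle points $a_i, a_j$.

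The main obstacle I anticipate is this cycle-difference analysis in the different-master case, particularly handling the parity subtleties for even $\ell$ and treating edges $e = a_p a_{p+1}$ lying on $\mathcal{C}$ itself, since both endpoints then sit on the cycle and $d_G(s,e)$ becomes a minimum of two cycle distances, breaking the clean additive decomposition and requiring a more delicate case analysis at the endpoints. Once this step is secured, the vertex and edge statements for the cross-cycle case follow by essentially parallel arguments, completing the verification that $S$ is simultaneously a metric and edge metric generator.
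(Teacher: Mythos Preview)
The paper does not supply a proof of this lemma; it is quoted from Sedlar and \v{S}krekovski \cite{sedlar2021} and used as a black box throughout, so there is nothing in the present paper to compare your attempt against.

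Your outline is nonetheless the natural direct approach. The same-master case is exactly Lemma~\ref{Sedlar-1} (also imported from \cite{sedlar2021}), and in this paper one would simply invoke it rather than re-derive it. For the different-master case your reduction to the cycle-difference function $f(a_k)=d_{\mathcal C}(a_k,a_i)-d_{\mathcal C}(a_k,a_j)$ is the right idea, but two points need more care than your sketch provides. First, the additive decomposition $d_G(s,x)=d(s,a_k)+d_{\mathcal C}(a_k,a_i)+d(a_i,x)$ breaks down when $k\in\{i,j\}$, since $s$ and $x$ then lie in the same tree and the geodesic between them need not pass through $a_i$; because at most two of $k_1,k_2,k_3$ can coincide with $i$ or $j$, you must handle that situation separately (it can be salvaged via the inequality $d_{T_{a_i}}(s,x)\le d(s,a_i)+d(a_i,x)$ together with the observation that $f$ attains its global minimum $-d_{\mathcal C}(a_i,a_j)$ at $a_i$). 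Second, the core assertion that $f$ is non-constant on any geodesic triple is not established by merely writing the identity $\sum d_{\mathcal C}(a_{k_r},a_{k_s})=\ell$; it requires an honest case analysis according to which of the three arcs determined by the triple contain $a_i$ and $a_j$. You correctly flag this as the main obstacle, and it is.
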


\begin{lemma} \cite{sedlar2021} \label{Sedlar-1}
Let $G$ be a unicyclic graph. If $S \subseteq V(G)$ is a branch-resolving set of $G$  such that  $|\mathcal{A}(S)|\geq 2$, then any two vertices (also any two edges) from each $T_{a_i}$, $i\in [0, \ell-1]$, can be distinguished by $S$.
\end{lemma}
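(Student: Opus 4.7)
The lemma is structural: since $T_{a_i}$ is attached to the rest of $G$ only at $a_i$, any shortest path from a vertex $s\notin V(T_{a_i})$ to a vertex $x\in V(T_{a_i})$ must enter $T_{a_i}$ through $a_i$, giving the funnel identity $d_G(s,x)=d_G(s,a_i)+d_G(a_i,x)$. My plan is to use an external witness via this identity when possible, and otherwise to use the branch-resolving hypothesis to locate an internal witness inside $T_{a_i}$. I will carry out the argument for distinct vertices $u,u'\in V(T_{a_i})$; the edge case is parallel.

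The easy sub-case is $d_G(a_i,u)\neq d_G(a_i,u')$. Since $|\mathcal{A}(S)|\geq 2$, there is some $j\neq i$ with $a_j\in\mathcal{A}(S)$, and any $s\in S\cap V(T_{a_j})$ satisfies $d_G(s,u)-d_G(s,u')=d_G(a_i,u)-d_G(a_i,u')\neq 0$. In the hard sub-case $d_G(a_i,u)=d_G(a_i,u')$, I view $T_{a_i}$ as a tree rooted at $a_i$, let $w$ be the nearest common ancestor of $u$ and $u'$, and let $T_1,T_2$ be the two components of $T_{a_i}-w$ containing $u$ and $u'$ respectively; by construction $d_G(w,u)=d_G(w,u')\geq 1$. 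A direct tree-distance calculation---writing the path $s\to w$ through the root $c_u$ of $T_1$ and decomposing via the nearest common ancestor of $s$ and $u$ inside $T_1$---gives $d_G(s,w)+d_G(w,u)-d_G(s,u)\geq 2$ for every $s\in V(T_1)$, whereas $d_G(s,u')=d_G(s,w)+d_G(w,u')=d_G(s,w)+d_G(w,u)$. Consequently every $s\in S\cap(V(T_1)\cup V(T_2))$ satisfies $d_G(s,u)<d_G(s,u')$ (or the symmetric inequality from $T_2$) and distinguishes the pair.

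The core of the proof is therefore to show $S\cap(V(T_1)\cup V(T_2))\neq\emptyset$. Observe first that $w$ is always a $3^+$-vertex of $G$: if $w\neq a_i$ then $w$ has at least two children (on the paths to $u$ and $u'$) plus its parent in $T_{a_i}$, giving $d_{T_{a_i}}(w)\geq 3$; and if $w=a_i$ the two cycle neighbours boost its degree to at least $4$. If both $T_1$ and $T_2$ are induced paths, each is a thread in $\mathcal{S}(w)$, and the branch-resolving hypothesis permits at most one thread of $w$ to miss $S$, so at least one of $T_1,T_2$ contains a vertex of $S$. Otherwise, say $T_1$ contains a $3^+$-vertex of $T_{a_i}$ (equivalently of $G$, since $a_i\notin V(T_1)$); I pick an \emph{innermost} such vertex $v^*\in V(T_1)$ whose descendants in $T_1$ are all of degree $1$ or $2$. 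Each child-subtree of $v^*$ is then a thread attached to $v^*$, so $|\mathcal{S}(v^*)|\geq 2$, and the branch-resolving property supplies $s\in S$ inside one of these threads, which lies entirely in $T_1$.

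For distinct edges $e=v_1v_2,\,e'=v_1'v_2'$ in $T_{a_i}$ with $v_1,v_1'$ the $a_i$-closer endpoints, the same template applies using $d_G(a_i,e)=d_G(a_i,v_1)$. The easy case $d_G(a_i,e)\neq d_G(a_i,e')$ is again settled by an outside $S$-witness. In the hard case one takes $w$ to be the nearest common ancestor of $v_1,v_1'$---allowing the degenerate $v_1=v_1'=w$, in which case the two edges diverge through distinct children of $w$---and an entirely analogous computation yields $d_G(s,e)<d_G(s,e')$ for every $s\in V(T_1)$, while the branch-resolving argument of the previous paragraph again supplies such an $s\in S$. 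The main obstacle I anticipate is the innermost-branching-vertex step: one must verify that each child-subtree of $v^*$ is a thread in the paper's strict sense (its head is a $G$-degree-$2$ vertex continuing to a $1$-vertex), which relies on $v^*\in V(T_1)$ forcing $v^*\neq a_i$ so that $d_G(v^*)=d_{T_{a_i}}(v^*)$ and the cycle cannot spoil the degree bookkeeping.
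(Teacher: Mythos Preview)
The paper does not supply its own proof of this lemma; it is quoted from Sedlar and \v{S}krekovski \cite{sedlar2021} and used as a black box, so there is nothing in-paper to compare against. Your argument is essentially correct and self-contained: the funnel identity $d_G(s,x)=d_G(s,a_i)+d_G(a_i,x)$ disposes of the unequal-depth case via any external $S$-active tree, and for equal depths the nearest-common-ancestor vertex $w$ together with the branch-resolving hypothesis produces a witness inside one of the two diverging subtrees $T_1,T_2$; the edge version follows by the same template.

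One minor imprecision is worth tightening. Your dichotomy ``$T_1$ and $T_2$ are both induced paths'' versus ``one of them contains a $3^+$-vertex of $G$'' is not literally a partition: $T_1$ can be a path as an abstract graph while its root $c_u$ has $d_G(c_u)=3$, in which case $c_u$ sits as an \emph{internal} vertex of that path and $T_1$ is \emph{not} a thread attached to $w$ in the paper's sense (the first vertex $v_1$ of a thread must be a $2$-vertex). The clean split is ``neither $T_1$ nor $T_2$ contains a $3^+$-vertex of $G$'' --- which is exactly equivalent to both being threads in $\mathcal S(w)$ --- versus ``some $T_r$ contains a $3^+$-vertex of $G$''; in the latter case your innermost-branching-vertex argument applies verbatim, with $v^*=c_u$ covering the borderline situation just described. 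With that adjustment the proof goes through for both the vertex and the edge statements.
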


What follows are three useful lemmas showing that $S\in \mathcal{B}(G)$ can distinguish some special pair of vertices and edges.

\begin{lemma} \label{add1}
Suppose that $G$ is a unicyclic graph with $|\mathcal{A}(S)|\geq 2$, where $S\in \mathcal{B}(G)$. Let $a_i, a_j$ be  two $S$-active vertices such that $0< |j-i| <\lfloor\frac{\ell}{2}\rfloor$ (relabel $\mathcal{C}$ if necessary). If  neither $a_i$ nor $a_j$ is a bad vertex, then any two vertices $v_1,v_2$ (resp. any two edges $e_1,e_2$) such that  $\{v_1,v_2\} \cap (V(T_{a_i}\cup V(T_{a_j})) \neq \emptyset$ (resp. $\{e_1,e_2\} \cap (E(T_{a_i}\cup E(T_{a_j})) \neq \emptyset$) can be distinguished by $S$.
\end{lemma}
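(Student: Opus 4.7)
My approach is a distance-equation case analysis. Lemma~\ref{Sedlar-1} already disposes of any pair whose both members lie in a common $T_{a_k}$, so I may assume the vertex pair is $v_1\in V(T_{a_i})$ (the symmetric case $v_1\in V(T_{a_j})$ is handled by swapping $i,j$ and invoking not-badness of $a_j$ in place of $a_i$) and $v_2\in V(T_{a_m})$ with $m\neq i$. Pick $u_i\in S\cap V(T_{a_i})$, $u_j\in S\cap V(T_{a_j})$, and set $\alpha_i=d_G(u_i,a_i)$, $\alpha_j=d_G(u_j,a_j)$, $\beta_{v_1}=d_G(a_i,v_1)$, $\beta_{v_2}=d_G(a_m,v_2)$, $k=|j-i|$. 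The key structural observation is that $d_G(u_i,v_2)$, $d_G(u_j,v_1)$, and $d_G(u_j,v_2)$ each decompose as (tree distance)$+$(cycle distance)$+$(tree distance), whereas $d_G(u_i,v_1)=d_{T_{a_i}}(u_i,v_1)\leq\alpha_i+\beta_{v_1}$ with equality iff the $T_{a_i}$-path from $u_i$ to $v_1$ passes through $a_i$.

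I then split on the cycle position of $a_m$ into four subcases: $(i)$ $m=j$; $(ii)$ $a_m$ on the short arc between $a_i$ and $a_j$; $(iii)$ $a_m$ on the long arc with the $(a_j,a_m)$-geodesic not through $a_i$; and $(iv)$ $a_m$ on the long arc with the $(a_j,a_m)$-geodesic through $a_i$. In subcases $(i)$--$(iii)$, assuming both $u_i$ and $u_j$ fail to distinguish yields two equations/inequalities for $\beta_{v_1}-\beta_{v_2}$; combined with the bound on $d_G(u_i,v_1)$ and the hypothesis $0<k<\lfloor\ell/2\rfloor$, these force a numerical contradiction (essentially $k\leq 0$ or $2t\geq\ell$, where $t$ is the cycle-distance from $a_j$ to $a_m$ along the long arc), so at least one of $u_i,u_j$ distinguishes. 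This part is routine arithmetic.

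The delicate case is $(iv)$, which I call Range~$3$. Here the failure equations are mutually compatible: together they force $\beta_{v_1}-\beta_{v_2}=\ell-k-t$ and $d_{T_{a_i}}(u_i,v_1)=\alpha_i+\beta_{v_1}$, the triangle-equality saying the $T_{a_i}$-path from $u_i$ to $v_1$ passes through $a_i$. If $d_G(a_i)=3$, then $T_{a_i}$ has a single non-cycle subtree off $a_i$ and this equality cannot hold, whence $u_i$ already distinguishes. Otherwise $d_G(a_i)\geq 4$, and since $a_i$ is not bad, $a_i$ has no attached thread, so every subtree of $T_{a_i}$ hanging off $a_i$ contains a $3^+$-vertex of $G$.

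The main obstacle is to produce another $S$-vertex inside the subtree $T'\subseteq T_{a_i}$ that contains $v_1$ (necessarily distinct from the subtree of $u_i$) and to use it. I will extract such a vertex via a ``deepest branching vertex'' argument: choose $w$, a $3^+$-vertex of $G$ lying in $T'$ at maximum $d_G$-distance from $a_i$; each child-subtree of $w$ in $T'$ contains no $3^+$-vertex and is therefore a thread at $w$, and since $\deg_G(w)\geq 3$ this gives $|\mathcal{S}(w)|\geq 2$. The branch-resolving property of $S$ then forces $S$ to contain a $1$-vertex of some thread at $w$, yielding $u^*\in T'\cap S$. Since both $u^*$ and $v_1$ lie below the neighbor $b$ of $a_i$ that roots $T'$, one has $d_{T_{a_i}}(u^*,v_1)\leq(\alpha^*-1)+(\beta_{v_1}-1)=\alpha^*+\beta_{v_1}-2$, whereas $d_G(u^*,v_2)=\alpha^*+(\ell-k-t)+\beta_{v_2}=\alpha^*+\beta_{v_1}$, so $u^*$ distinguishes $v_1$ and $v_2$. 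The edge version of the lemma proceeds identically upon replacing $\beta_v$ by $\beta_e:=d_G(a_i,e^-)$ (where $e^-$ is the endpoint of $e$ closer to $a_i$) and noting that in Range~$3$ we still get $d_{T_{a_i}}(u^*,e_1)\leq\alpha^*+\beta_{e_1}-2$ by the same triangle inequality inside $T'$.
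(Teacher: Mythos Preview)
Your proof is correct and follows essentially the same strategy as the paper: first try $u_i,u_j\in S$, and when both fail, locate a third $S$-vertex inside the subtree $T'\subseteq T_{a_i}$ containing $v_1$ by exploiting that $a_i$ is not bad. The only organizational difference is that you case-split on the cycle position of $a_m$ (your cases (i)--(iv)), whereas the paper case-splits on whether the branch point $x\in V(P_{u_i,v_1})\cap V(P_{u_i,a_i})$ equals $a_i$; your Range~3 is exactly the paper's Case~2 ($x=a_i$), and your ``deepest branching vertex'' extraction of $u^*$ is a more explicit justification of the paper's one-line claim that $V(T')\cap S\neq\emptyset$. Two small points to tighten: for even $\ell$ the tie case $d_G(a_j,a_m)=\ell/2$ should be explicitly absorbed into Range~3 rather than left in case~(iii), and for the edge version you should note separately the possibility $e_2\in E(\mathcal{C})$, which the paper tracks via its parenthetical ``$e_2=a_{j'}a_{j'+1}$'' sub-branch.
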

\begin{proof}
By symmetry, let $i=0$ and $i<j<\lfloor\frac{\ell}{2}\rfloor$.
It suffices to deal with  $\{v_1,v_2\} \cap V(T_{a_0}) \neq \emptyset$ (resp. $\{e_1,e_2\} \cap E(T_{a_0}) \neq \emptyset$), since the case that  $\{v_1,v_2\} \cap V(T_{a_j}) \neq \emptyset$ (resp. $\{e_1,e_2\} \cap E(T_{a_j}) \neq \emptyset$) can be addressed in a similar way.
Without loss of generality,  assume that $v_1\in V(T_{a_0})$ (resp. $e_1\in E(T_{a_0})$) and let $v_2 \in V(T_{a_{j'}})$ (resp. $e_2 \in E(T_{a_{j'}})$ or $e_2=a_{j'}a_{j'+1}\in E(\mathcal{C})$), where $j'\in [0,\ell-1]$ and $a_{\ell}=a_0$. By Lemma \ref{Sedlar-1}, we only consider the case  $v_2 \notin V(T_{a_0})$ (resp. $e_2\notin E(T_{a_0})$). Let
$u\in (V_{T_{a_0}}\cap S)$ and $v\in  (V_{T_{a_0}}\cap S)$.
In the remainder of the proof, we suppose that $d_G(u, v_1)=d_G(u, v_2)$ (resp. $d_G(u, e_1)=d_G(u, e_2)$). Clearly, $v_1 \notin V(P_{u,a_0})$ (resp. $e_1 \notin E(P_{u,a_0}$), otherwise $d_G(u, v_1) < d_G(u,v_2)$ (resp. $d_G(u, e_1) < d_G(u,e_2)$).
Observe that $d_G(u)=1$; so, $u \notin V(P_{a_0, v_1})$ (resp. $u \notin E(P_{a_0,e_1}$).

Denote by $x \in V(P_{u, v_1}) \cap V(P_{u,a_0})$ (resp. $x \in V(P_{u, e_1}) \cap V(P_{u, a_0})$) the vertex that has the minimum distance with $a_0$ ($x=a_0$ is probable). Then, $d_G(a_0, u)= d_G(a_0, x)+ d_G(x, u)$, $d_G(a_0, v_1)= d_G(a_0, x)+ d_G(x, v_1)$ (resp. $d_G(a_0, e_1)=d_G(a_0, x)+ d_G(x, e_1)$), and
\begin{center}
$\left\{ \begin{array}{lc}
d_G(u, v_1)=d_G(u, x) + d_G(x, v_1)\\
d_G(u, v_2)=d_G(u, x)+d_G(x, a_0)+ d_G(a_0, v_2)\\
d_G(v,v_1)=d_G(v,a_j)+j+ d_G( x, a_0)+ d_G(x, v_1)\\
d_G(v,v_2)=d_G(v,a_j)+d_G(a_j,a_{j'})+ d_G(a_{j'}, v_2)
\end{array}\right.$
~{\Large{(}}resp.~
$\left\{ \begin{array}{lc}
d_G(u, e_1)=d_G(u, x) + d_G(x, e_1)\\
d_G(u, e_2)= d_G(u, x)+d_G(x, a_0)+ d_G(a_0, e_2)\\
d_G(v,e_1)= d_G(v,a_j)+j+ d_G(a_0, x)+ d_G(x, e_1)\\
d_G(v,e_2)=d_G(v,a_j)+d_G(a_j, e_2)
\end{array}\right.${\Large{)}}
\end{center}
By $d_G(u, v_1)= d_G(u, v_2)$ (resp. $d_G(u, e_1)= d_G(u, e_2)$), we have
$d_G(x,v_1) = d_G(x,a_0)+ d_G(a_{0},v_{j'})+d_G(a_{j'},v_2)$ (resp. $d_G(x, e_1) = d_G(x, a_0)+d_G(a_{0}, e_2)$). Now, suppose that $d_G(v,v_1)=d_G(v,v_2)$ (resp. $d_G(v,e_1)=d_G(v,e_2)$). Then,
\vspace{-0.05cm}
\begin{equation}\label{equ-1}
d_G(a_j,a_{j'})= j+2d_G(x,a_0)+d_G(a_0, a_{j'})~~~(resp.~~~
d_G(a_j,e_2)= j+ 2d_G(x, a_0)+d_G(a_{0}, e_2)~)
\end{equation}

\vspace{-0.05cm}
Case 1. $x\neq a_0$. Then, $d_G(x,a_0)>0$. By Equation (\ref{equ-1}), $j'>\lfloor\frac{\ell}{2}\rfloor$. So, $d_G(a_0, a_{j'}) =\ell-j'$ (resp. $d_G(a_0, e_2) =\ell-j'+ d_G(a_{j'}, e_2)$ when $e_2\notin E(\mathcal{C})$ and  $d_G(a_0, e_2) =\ell-j'-1$  when $e_2\in E(\mathcal{C})$).
If $j'-j \leq \lfloor\frac{\ell}{2}\rfloor$,  then  $d_G(a_j,a_{j'})= j'-j$ (resp. $d_G(a_j,e_2)= j'-j+ d_G(a_{j'}, e_2)$ when $e_2\notin E(\mathcal{C})$ and  $d_G(a_j,e_2)= j'-j$ when $e_2\in E(\mathcal{C})$), which implies (by Equation (\ref{equ-1})) that $2d_G(x,a_0) = 2j'-2j-\ell \leq 0$ (or $2d_G(x,a_0) = 2j'-2j-\ell+1 \leq 0$ when $e_2\in E(\mathcal{C})$, since $\ell \equiv 1$ (mod 2) in this case), a contradiction.  If $j'-j >\lfloor\frac{\ell}{2}\rfloor$, then $d_G(a_j,a_{j'})= \ell-j'+j$ (resp. $d_G(a_j,e_2)= \ell-j'+j+ d_G(a_{j'}, e_2)$ when $e_2\notin E(\mathcal{C})$ and  $d_G(a_j,e_2)= \ell-j'-1+j$ when $e_2\in E(\mathcal{C})$), which implies (by Equation (\ref{equ-1})) that $d_G(x,a_0)=0$,  a contradiction.

Case 2. $x=a_0$. Then, $d_G(u, v_1)= d_G(u, v_2)$ (resp. $d_G(u, e_1)= d_G(u, e_2)$) implies that $d_G(a_0,v_1)= d_G(a_0,v_2)$ (resp. $d_G(a_0,e_1)= d_G(a_0,e_2)$).
Notice that $v_1\neq a_0$ (resp. $e_1 \notin E(P_{u,a_0})$). We have that  $T_{a_0}-a_0$  is disconnected. Let $T'$ be the component of $T_{a_0}-a_0$ that contains  $v_1$ (resp. $e_1$ or one endpoint of $e_1$ when $e_1a_0\in E(G)$). Clearly, $u\notin V(T')$.
Since $a_0$ is not a bad vertex,  $V(T')\cap S \neq \emptyset$. Let $w\in (V(T')\cap S)$ and let $w' \in (V(P_{w, v_1}) \cap V(P_{w, a_0}))$ (resp. $w' \in (V(P_{w, e_1})\cap V(P_{a_0,v_1}))$) be the vertex that has the minimum distance with $a_0$. Clearly, $w'\neq a_0$. Observe that $d_G(w)=1$. So,  $w\notin V(P_{a_0, v_1})$ (except for the case that $w=v_1$, for which  $d_G(w, v_1)=0<d_G(w,v_2)$) and $w\notin V(P_{a_0, e_1})$. We also assume that $v_1$ (resp. $e_1$) is not on $P_{a_0,w}$; otherwise  $d_G(w,v_1)<d_G(w,v_2)$ (resp. $d_G(w,e_1)<d_G(w,e_2)$). Thus,
$d_G(w, v_1) = d_G(w,w')+ d_G(w', v_1)$ and $d_G(a_0, v_1)=d_G(a_0, w')+ d_G(w', v_1)$ (resp. $d_G(w, e_1) = d_G(w, w')+ d_G(w', e_1)$ and $d_G(a_0, e_1)=d_G(a_0, w')+ d_G(w', e_1)$). As a result,   $d_G(w, v_2) = d_G(w,w')+ d_G(w', a_{0})+ d(a_0, v_2)= d_G(w,w')+ d_G(w', a_{0})+ d_G(a_0, v_1)$ (resp. $d_G(w, e_2) =  d_G(w,w')+ d_G(w', a_{0})+ d_G(a_0, e_1)$). If $d_G(w, v_1) = d_G(w, v_2)$ (resp. $d_G(w, e_1) = d_G(w, e_2)$), then  $d_G(a_0, w')=0$, i.e., $a_0=w'$, a contradiction.
\end{proof}

\begin{lemma} \label{add2}
Let $G\in \mathcal{E}(\ell)$ and $S\in \mathcal{B}(G)$. Suppose that $\mathcal{A}(S)$ contains two vertices, say $a_0, a_j$, such that $1\leq j \leq  \frac{\ell-2}{2}$. Then, each vertex $w\in (V(T_{a_k})\cap S)$ for $k\in [1, j-1]$ can distinguish two vertices $v_1\in V(T_{a_{i'}})$ and $v_2\in V(T_{a_{j'}})$ (resp. two edges $e_1\in E(T_{a_{i'}})$ and $e_2\in V(T_{a_{j'}})$) for $i'\in [1, j-1]$ and $j'\in [\frac{\ell}{2}+1, \frac{\ell}{2}+j]$ such that $d_G(a_0,v_1)=d_G(a_0,v_2)$ and  $d_G(a_j,v_1)=d_G(a_j,v_2)$ (resp. $d_G(a_0,e_1)=d_G(a_0,e_2)$ and  $d_G(a_j,e_1)=d_G(a_j,e_2)$).
\end{lemma}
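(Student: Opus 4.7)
The plan is to use the two hypothesised failure conditions at $a_0$ and $a_j$ to pin down (up to a free parameter) the cyclic position $j'$ in terms of $i'$ and the difference $\alpha-\beta$ of the relevant tree-depths, and then to verify directly that the forced configuration makes $w$ distinguish the pair.

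First I would set $\alpha := d_G(a_{i'}, v_1)$ and $\beta := d_G(a_{j'}, v_2)$. Because each $T_{a_m}$ is attached to the cycle only at $a_m$, every shortest path from a vertex outside $T_{a_m}$ to a point of $T_{a_m}$ factors through $a_m$, so I may read off
\[
d_G(a_0, v_1) = i' + \alpha, \qquad d_G(a_j, v_1) = (j - i') + \alpha,
\]
\[
d_G(a_0, v_2) = (\ell - j') + \beta, \qquad d_G(a_j, v_2) = (j' - j) + \beta,
\]
where the specific arcs used are the short ones: for $v_1$ this follows from $i' \in [1, j-1]$ with $j \leq \frac{\ell}{2}-1$, and for $v_2$ from $j' \in [\frac{\ell}{2}+1, \frac{\ell}{2}+j]$, which gives $\ell-j' < j'$ and $j'-j \leq \frac{\ell}{2}$. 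Substituting into the two failure hypotheses and subtracting yields
\[
j' \;=\; \frac{\ell}{2} + j - i', \qquad \alpha - \beta \;=\; \frac{\ell}{2} - j.
\]

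Next, since $w$ is a $1$-vertex of $T_{a_k}$, every shortest path from $w$ to a vertex outside $T_{a_k}$ passes through $a_k$, so
\[
d_G(w, v_1) - d_G(w, v_2) \;=\; |k - i'| \,-\, d_G(a_k, a_{j'}) \,+\, \tfrac{\ell}{2} - j.
\]
The two arcs from $a_k$ to $a_{j'}$ have lengths $\frac{\ell}{2}+j-i'-k$ (via $a_j$) and $\frac{\ell}{2}-j+i'+k$ (via $a_0$); the former is shorter precisely when $k \geq j - i'$. I would then split into the four sub-cases determined by the signs of $k - i'$ and $k - (j - i')$. In each sub-case the displayed difference collapses to one of $2k-2j$, $2i'-2j$, $-2i'$, $-2k$, and since $k, i' \in [1, j-1]$ every one of these values is at most $-2$. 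Hence the difference is strictly negative and $w$ distinguishes $v_1$ from $v_2$.

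For the edge version, redefining $\alpha := d_G(a_{i'}, e_1)$ and $\beta := d_G(a_{j'}, e_2)$ leaves every displayed identity intact, because $d_G(u, e) = d_G(u, a_m) + d_G(a_m, e)$ whenever $u \notin V(T_{a_m})$ and $e \in E(T_{a_m})$; the argument is symbolically identical. The genuine work lies in the four-way case analysis and in checking which arc realises $d_G(a_k, a_{j'})$ in each case; these steps are routine but bookkeeping-heavy, and form the main obstacle one has to get right.
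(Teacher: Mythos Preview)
Your derivation of the constraints $j'=\tfrac{\ell}{2}+j-i'$ and $\alpha-\beta=\tfrac{\ell}{2}-j$ matches the paper exactly, and your four-way split according to the signs of $k-i'$ and $k-(j-i')$ is a clean reorganisation of the paper's case analysis; the arithmetic in each sub-case is correct.

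There is one genuine slip. Your displayed identity
\[
d_G(w,v_1)-d_G(w,v_2)=|k-i'|-d_G(a_k,a_{j'})+\tfrac{\ell}{2}-j
\]
relies on $d_G(w,v_1)=d_G(w,a_k)+|k-i'|+\alpha$, but your justification (``every shortest path from $w$ to a vertex \emph{outside} $T_{a_k}$ passes through $a_k$'') covers only $v_2$. When $k=i'$ the vertex $v_1$ lies in $T_{a_k}$ and the $w$--$v_1$ geodesic need not visit $a_{i'}$ at all, so the equality can fail. The paper treats this case separately (its Case~2), introducing the branch point $w'$ on $P_{w,a_{i'}}\cap P_{w,v_1}$. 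In your framework the repair is immediate: for $k=i'$ one has only $d_G(w,v_1)\le d_G(w,a_{i'})+\alpha$ by the triangle inequality, so your displayed line becomes an \emph{upper bound}; since each of your four sub-case values is $\le -2$, the conclusion $d_G(w,v_1)<d_G(w,v_2)$ still follows. You should state this explicitly rather than assert equality.
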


\begin{proof}
For convenience, we use $x_i$ to denote $v_i$ or $e_i$ for $i\in [1,2]$, and let $d_G(a_{i'}, v_i)=m_i$ (resp. $d_G(a_{i'}, e_i)=m_i$). Then, for any $x_i \in \{v_i,e_i\}$ ($r\in [1,2]$),
$d_G(a_0, x_1)=i'+m_1, d_G(a_0, x_2)=\ell-j'+m_2, d_G(a_j, x_1)=j-i'+m_1,$ and $d_G(a_j, x_2)=j'-j+m_2$.
By $d_G(a_0,v_1)=d_G(a_0,v_2)$ and  $d_G(a_j,v_1)=d_G(a_j,v_2)$ (resp. $d_G(a_0,e_1)=d_G(a_0,e_2)$ and  $d_G(a_j,e_1)=d_G(a_j,e_2)$), we deduce that $m_1-m_2=\ell-(i'+j')$ and $\ell=2(i'+j'-j)$.
Let $d_G(w, a_k)=m$.

Case 1.  $k\neq i'$. In this case,  $d_G(w, x_1)=m+|k-i'|+m_1$. When $j'-k\leq\frac{\ell}{2}$,  it has that $d_G(w, x_2)= m+j'-k+m_2$. If $d_G(w, x_1)=d_G(w, x_2)$, then  either $\ell=2(i'+j'-k)$ (when $k>i'$) which implies that $k=j$, or $j'= \frac{\ell}{2}$ (when $k<i'$). When $j'-k> \frac{\ell}{2}$,  it has that $d_G(w, x_2)= m+k+\ell-j'+m_2$. If $d_G(w, x_1)=d_G(w, x_2)$, then  either $i'=0$ (when $k>i'$), or $k=0$ (when $k<i'$). Clearly, each of these cases yields a contradiction.

Case 2. $k=i'$. If $x_1$  is on $P_{w, a_{i'}}$, it is clear that  $d_G(w, v_1) < d_G(w,v_2)$ (resp. $d_G(w, e_1) < d_G(w, e_2)$). So, suppose that $x_1$ is not on $P_{w, a_{i'}}$.  Let $w'$ be the common vertex shared by $P_{w, a_{i'}}$ and $P_{w, x_1}$ that has the minimum distance with $a_{i'}$. Then, $d_G(w, x_1)=d_G(w,w')+d_G(w', x_1)$ and $m_1= d_G(a_{i'}, w') + d_G(w', x_1)$. Note that $j'\geq \frac{\ell}{2}+1$, $m_2 = m_1-\ell+(i'+j')$, and $d_G(w', x_1) \leq  m_1$. When $j'-i'\leq \frac{\ell}{2}$, we have that $d_G(w, x_2)= d_G(w,w')+d_G(w',a_{i'})+j'-i'+m_2 = d_G(w,w')+d_G(w',a_{i'})+(2j'-\ell)+m_1 > d_G(w,w')+d_G(w', x_1) = d_G(w, x_1)$.  When $j'-i'> \frac{\ell}{2}$, we have that $d_G(w, x_2) = d_G(w,w')+d_G(w',a_{i'})+\ell-j'+i'+m_2 = d_G(w,w')+d_G(w',a_{i'})+2i'+m_1 > d_G(w,w')+d_G(w', x_1) = d_G(w, x_1)$.
\end{proof}

\begin{lemma}\label{add3}
Suppose that $G$ is a unicyclic graph. Let $a_i, a_j$ be any two  vertices on $\mathcal{C}$, say $a_0,a_j$ such that $0<j\leq \lfloor\frac{\ell-1}{2}\rfloor$. Then, any two vertices  $v_1 \in V(T_{a_{i'}})$ and $v_2\in V(T_{a_{j'}})$ (resp. two edges $e_1 \in E(T_{a_{i'}})$ and $e_2\in E(T_{a_{j'}}$) such that  $i',j'\in [0,\ell-1] \setminus \{i,j\}$ and $i'<j'$  can not be distinguished by $\{a_0,a_j\}$ iff one of the following conclusions holds, where $m_1=d_{G}(a_{i'}, v_1)$ and $m_2 = d_{G}(a_{i'}, v_2)$ (resp. $m_1=d_{G}(a_{i'}, e_1)$ and $m_2 = d_{G}(a_{i'}, e_2)$).

(i) $j'\in [j+1, \lfloor\frac{\ell}{2}\rfloor]$, $i'>j$, and $m_1-m_2=j'-i'$; or

(ii) $j'\in [\lfloor\frac{\ell}{2}\rfloor+1, \lfloor\frac{\ell}{2}\rfloor+j]$, $i'<j$, $\ell=2i'+2j'-2j$, and $m_1-m_2=\ell-j'-i'$; or

(iii) $j'\in [\lfloor\frac{\ell}{2}\rfloor+j+1, \ell-1]$, $\ell=2(i'-j)$, and $m_1-m_2=i'-j'$; or  $i'> \lfloor\frac{\ell}{2}\rfloor+j$ and $m_1-m_2=i'-j'$.
\end{lemma}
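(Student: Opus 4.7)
The plan is to turn the failure of $\{a_0,a_j\}$ to distinguish $x_1,x_2$ (writing $x_r\in\{v_r,e_r\}$ for $r\in\{1,2\}$) into a pair of linear equations in $m_1-m_2$, and then case-split on which arcs of $\mathcal{C}$ realize the shortest paths from $a_0$ and $a_j$ to $a_{i'}$ and $a_{j'}$. Since $a_0,a_j\notin V(T_{a_{k}})$ for $k\in\{i',j'\}$, the shortest path from $a_0$ (resp.\ $a_j$) to $x_r$ must traverse the shorter $\mathcal{C}$-arc from $a_0$ (resp.\ $a_j$) to $a_{k_r}$ and then the unique $a_{k_r}$--$x_r$ path in $T_{a_{k_r}}$, where $k_1=i'$, $k_2=j'$. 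Hence
\begin{equation*}
d_G(a_0,x_r)=d_G(a_0,a_{k_r})+m_r,\qquad d_G(a_j,x_r)=d_G(a_j,a_{k_r})+m_r,
\end{equation*}
and $\{a_0,a_j\}$ fails to distinguish $x_1,x_2$ iff
\begin{equation*}
m_1-m_2=d_G(a_0,a_{j'})-d_G(a_0,a_{i'})=d_G(a_j,a_{j'})-d_G(a_j,a_{i'}).
\end{equation*}

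Next I would use that $d_G(a_0,a_k)=\min(k,\ell-k)$ and $d_G(a_j,a_k)=\min(|k-j|,\ell-|k-j|)$ change expression only at $k=\lfloor\ell/2\rfloor$ and $k-j=\lfloor\ell/2\rfloor$, respectively. Because $0<j\le\lfloor(\ell-1)/2\rfloor$, these two thresholds partition $\{0,\ldots,\ell-1\}\setminus\{0,j\}$ into four zones, which I label $Z_1=(0,j)$, $Z_2=(j,\lfloor\ell/2\rfloor]$, $Z_3=(\lfloor\ell/2\rfloor,\lfloor\ell/2\rfloor+j]$, and $Z_4=(\lfloor\ell/2\rfloor+j,\ell-1]$. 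I then enumerate the admissible pairs $(i',j')$ with $i'<j'$, substitute the explicit cycle distances, and record when the two right-hand sides agree. The three surviving configurations match (i)--(iii) of the lemma: pair $(Z_2,Z_2)$ forces $i'>j$ and automatically yields $m_1-m_2=j'-i'$, giving (i); pair $(Z_1,Z_3)$ yields $\ell-j'-i'=j'+i'-2j$, equivalent to $\ell=2(i'+j'-j)$ with common value $m_1-m_2=\ell-j'-i'$, giving (ii); pair $(Z_4,Z_4)$ automatically yields $m_1-m_2=i'-j'$, giving the second clause of (iii); and the boundary $i'=\lfloor\ell/2\rfloor+j$ (top of $Z_3$, possible only when $\ell$ is even, since then $a_{i'}$ is the antipode of $a_j$) paired with $j'\in Z_4$ collapses $d_G(a_j,a_{i'})$ to $\ell/2$ and yields the relation $\ell=2(i'-j)$ with $m_1-m_2=i'-j'$, giving the first clause of (iii). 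Every remaining zone pair, e.g.\ $(Z_1,Z_2)$, $(Z_1,Z_4)$, $(Z_2,Z_3)$, $(Z_2,Z_4)$, or $(Z_3,Z_4)$ with $i'$ in the interior of $Z_3$, collapses under the equality of the two differences to an impossibility such as $j=0$, $i'=0$, $i'=j$, or $i'=j'$.

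The main technical nuisance will be disciplined bookkeeping across the two parity-sensitive thresholds $\lfloor\ell/2\rfloor$ and $\lfloor\ell/2\rfloor+j$: the antipodal configuration must be singled out as the first clause of (iii) for $\ell$ even, while for $\ell$ odd one must verify that the putative equalities on the zone boundaries force a non-integer value (such as $j'=\ell/2$) and hence no solution. Once the zone enumeration is complete, each direction of the equivalence reduces to a two-line substitution, and reading those substitutions backwards supplies the converse, so the lemma is established.
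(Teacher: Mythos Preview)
Your approach is essentially the same as the paper's: both reduce non-distinguishability to the pair of equalities $m_1-m_2=d_G(a_0,a_{j'})-d_G(a_0,a_{i'})=d_G(a_j,a_{j'})-d_G(a_j,a_{i'})$ and then case-split on where $i',j'$ sit relative to the thresholds $\lfloor\ell/2\rfloor$ and $\lfloor\ell/2\rfloor+j$. The only difference is organizational---you enumerate zone pairs $(Z_p,Z_q)$ while the paper orders the cases by the range of $j'$ first and then by $i'$---but the computations and the resulting classification are identical.
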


\begin{proof}
Notice that when (i) holds,  $d_G(a_0, v_1)=i'+m_1, d_G(a_0, v_2)=j'+m_2, d_G(a_j, v_1)=i'-j+m_1$, and  $d_G(a_j, v_2)=j'-j+m_2$; when (ii) holds, $d_G(a_0, v_1)=i'+m_1, d_G(a_0, v_2)=\ell-j'+m_2, d_G(a_j, v_1)=j-i'+m_1$, and  $d_G(a_j, v_2)=j'-j+m_2$; when (iii) holds, $d_G(a_0, v_1)=\ell-i'+m_1, d_G(a_0, v_2)=\ell-j'+m_2, d_G(a_j, v_1)=\frac{\ell}{2}+m_1$, and  $d_G(a_j, v_2)=\ell+j-j'+m_2$. One can readily check that $d_G(a_0, v_1)= d_G(a_0, v_2)$ and $d_G(a_j, v_1)= d_G(a_j, v_2)$ when one of the three conditions in the theorem holds.

Now, suppose that $v_1,v_2$ (resp. $e_1,e_2$) can not be distinguished by $S$, i.e., $d_G(a_0, a_{i'})+m_1= d_G(a_0, a_{j'})+m_2$ and $d_G(a_j, a_{i'})+m_1= d_G(a_j, a_{j'})+m_2$. Then,
\begin{equation}\label{addeq}
 d_G(a_0, a_{i'}) -d_G(a_0, a_{j'})= d_G(a_j, a_{i'})- d_G(a_j, a_{j'}),~~ \mathrm{and} ~~m_1-m_2=d_G(a_0,a_{j'})-d_G(a_0,a_{i'})
\end{equation}
Clearly, $j'>j$ (otherwise, $d_G(a_0, a_{i'}) -d_G(a_0, a_{j'})<0$ but $d_G(a_j, a_{i'})- d_G(a_j, a_{j'})>0$, a contradiction).

Case 1. $j'\in [j+1, \lfloor\frac{\ell}{2}\rfloor]$. Then, $d_G(a_0, a_{i'})=i', d_G(a_0, a_{j'})=j', d_G(a_j, a_{i'})=|j-i'|$, and $d_G(a_j, a_{j'})=j'-j.$ By Equation (\ref{addeq}), if $i'<j$, then we derive $i'=j$, a contradiction. Therefore, $i'>j$, and hence $m_1-m_2=j'-i'$.

Case 2. $j'\in [\lfloor\frac{\ell}{2}\rfloor+1, \lfloor\frac{\ell}{2}\rfloor+j]$. When $i'>\lfloor\frac{\ell}{2}\rfloor$, $d_G(a_0, a_{i'})=\ell-i', d_G(a_0, a_{j'})=\ell-j', d_G(a_j, a_{i'})=i'-j$, and $d_G(a_j, a_{j'})=j'-j.$ By Equation (\ref{addeq}), we derive $i'=j'$, a contradiction. When $i'\leq \lfloor\frac{\ell}{2}\rfloor$, $d_G(a_0, a_{i'})=i', d_G(a_0, a_{j'})=\ell-j', d_G(a_j, a_{i'})=|j-i'|$, and $d_G(a_j, a_{j'})=j'-j.$ By Equation (\ref{addeq}), if $i'>j$, then we deduce that $\ell=2j'$, a contradiction. Therefore, it holds that $i'<j$, by which we have $\ell=2i'+2j'-2j$ and $m_1-m_2=\ell-j'-i'$.

Case 3.  $j'\in [\lfloor\frac{\ell}{2}\rfloor+j+1, \ell-1]$. When $i'\leq \lfloor\frac{\ell}{2}\rfloor$, $d_G(a_0, a_{i'})=i', d_G(a_0, a_{j'})=\ell-j', d_G(a_j, a_{i'})=|j-i'|$, and $d_G(a_j, a_{j'})=\ell-j'+j.$ By Equation (\ref{addeq}),  we deduce that either $i'=0$ (when $i'<j$) or $j=0$ (when $i'>j$). Therefore, $i'>\lfloor\frac{\ell}{2}\rfloor$. Moreover, when  $\lfloor\frac{\ell}{2}\rfloor<i'\leq \lfloor\frac{\ell}{2}\rfloor+j$, $d_G(a_0, a_{i'})=\ell-i', d_G(a_0, a_{j'})=\ell-j', d_G(a_j, a_{i'})=i'-j$, and $d_G(a_j, a_{j'})=\ell-j'+j.$ By Equation (\ref{addeq}), we deduce that $\ell=2i'-2j$ and $m_1-m_2=i'-j'$.
When $i'> \lfloor\frac{\ell}{2}\rfloor+j$, then $d_G(a_0, a_{i'})=\ell-i'$ and $d_G(a_0, a_{j'})=\ell-j'$, which implies that $m_1-m_2=i'-j'$.
\end{proof}

\section{Metric Dimension} \label{sec-1}

This section is devoted to the argument of our main results for metric dimension. Similar conclusions for edge metric dimension will be presented in the next section. To achieve our purpose, we need to construct four families of graphs $G$ for which dim($G$)= $L(G)$ or edim($G$)= $L(G)$. The first two families of graphs (for metric dimension) are described here (Definition \ref{def-new1}), while the other two families of graphs (for edge metric dimension) will be introduced  in the following section (Definition \ref{def-new2}).

\begin{definition}\label{def-new1}
Suppose that $G$ is a unicyclic graph of length $\ell (\geq 3)$ and  $j\in [1, \lfloor\frac{\ell-1}{2}\rfloor]$. If $G \in \mathcal{O}(\ell)$ satisfies either $j=\lfloor\frac{\ell-1}{2}\rfloor$ or the following two conditions (i) and (ii), then  we call $G$ an \emph{odd-$(\ell, j)$} graph;  if $G \in \mathcal{E}(\ell)$ satisfies the following three conditions (i), (ii), and (iii), then  we call $G$ an \emph{even-$(\ell, j)$} graph.
 \vspace{-0.2cm}
\begin{description}
  \item[(i)] Neither $a_0$ nor $a_j$ is a bad vertex;
  \vspace{-0.2cm}
  \item[(ii)]If $j< \lfloor\frac{\ell-2}{2}\rfloor$, $V(T_{a_k})=\{a_k\}$ for every $k\in \{\ell-1, \ldots, \ell-(\lfloor\frac{\ell-2}{2}\rfloor -j)\}\cup \{j+1,\ldots, j+(\lfloor\frac{\ell-2}{2}\rfloor -j)\}$. 
\vspace{-0.2cm}
  \item[(iii)]  If $j>1$ and $T_{a_k}$ contains no branching vertex for every $k\in [1,j-1]$,  then the length of $T_{a_k}$ is at most $\frac{\ell-2}{2} -j$.
\end{description}
\vspace{-0.2cm}
We use $\mathcal{O}(\ell,j)$ and $\mathcal{E}(\ell,j)$ to denote the set of all odd-$(\ell,j)$ graphs and even-$(\ell,j)$ graphs, respectively. See Figure \ref{example-1} for some examples of odd-$(\ell,j)$ graphs and even-$(\ell,j)$ graphs.
\end{definition}

\begin{figure}[H]
  \centering
  \includegraphics[width=12cm]{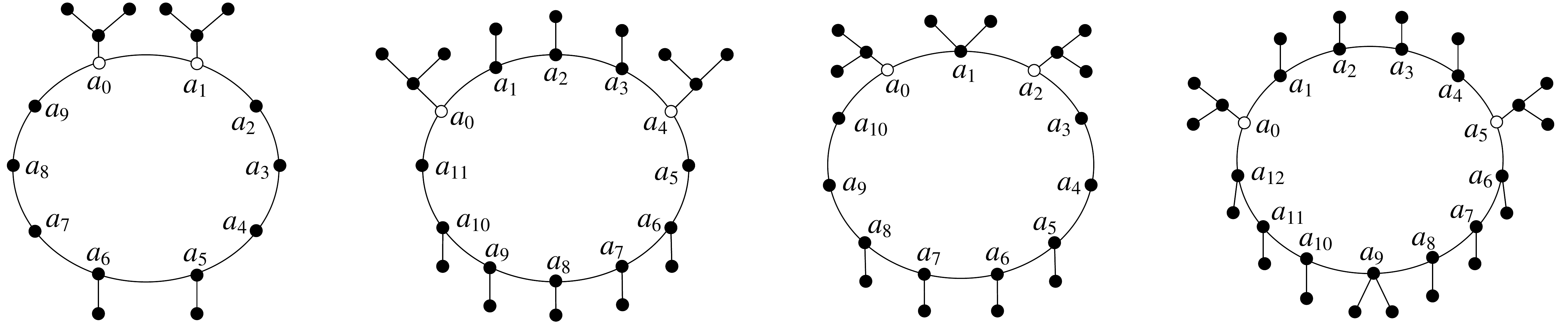}\\
  (a)\hspace{2.7cm} (b)\hspace{2.7cm} (c)\hspace{2.7cm} (d)
  \caption{
  (a) even-(10,1); (b)  even-(12,4); (c)  odd-(11,2); (d)  odd-(13,5)}\label{example-1}
\end{figure}

We observe that  when $\ell \equiv 1$ (mod 2), $\{a_0, a_{\frac{\ell-1}{2}}\}$ is enough to distinguish any pair of vertices that are not in the same component of $G-E(\mathcal{C})$. But this result does not hold for other cases.

\begin{lemma}\label{odd-cycle-1}
Let $G\in \mathcal{O}(\ell)$ and $S\in \mathcal{B}(G)$. If there are two $S$-active vertices  whose distance is $\frac{\ell-1}{2}$, then $S$ is a metric generator of $G$.
\end{lemma}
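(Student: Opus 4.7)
The plan is to show that $S$ distinguishes every pair of distinct vertices of $G$, and hence is a metric generator. By relabeling $\mathcal{C}$ if necessary, I assume the two $S$-active vertices at distance $\frac{\ell-1}{2}$ are $a_0$ and $a_j$ with $j=\frac{\ell-1}{2}=\lfloor\ell/2\rfloor$, and fix leaves $u\in V(T_{a_0})\cap S$ and $v\in V(T_{a_j})\cap S$. Pairs of vertices lying in a common component $T_{a_k}$ are handled by Lemma \ref{Sedlar-1}, so it suffices to treat pairs $v_1\in V(T_{a_{i'}})$ and $v_2\in V(T_{a_{j'}})$ with $i'\neq j'$. Throughout I will use the fact that the shortest arc of $\mathcal{C}$ between $a_0$ and $a_j$ has length $j$, since $j<\ell-j$ when $\ell$ is odd.

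The first step dispatches the case $\{i',j'\}\cap\{0,j\}=\emptyset$. Because $u$ is a leaf of $T_{a_0}$, every path leaving $T_{a_0}$ passes through $a_0$, giving $d_G(u,v_k)=d_G(u,a_0)+d_G(a_0,v_k)$ for $k\in\{1,2\}$; an analogous identity holds for $v$ and $a_j$. Thus $\{u,v\}$ distinguishes $v_1,v_2$ iff $\{a_0,a_j\}$ does, so I apply Lemma \ref{add3} to $\{a_0,a_j\}$ and rule out its three conclusions under $\ell$ odd and $j=\lfloor\ell/2\rfloor$: the interval $[j+1,\lfloor\ell/2\rfloor]$ in (i) is empty; condition (ii) simplifies to $2\ell-1=2(i'+j')$, contradicting parity; and in (iii) either $\ell=2(i'-j)$ is even, or $i'>\lfloor\ell/2\rfloor+j=\ell-1$ exceeds the allowed range.

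Next I treat the remaining case, where at least one of $v_1,v_2$ lies in $V(T_{a_0})\cup V(T_{a_j})$; by symmetry I take $v_1\in V(T_{a_0})$. Put $q=d_G(a_0,v_1)$ and let $s\geq 0$ be the distance from $a_0$ to the vertex of $V(P_{u,a_0})\cap V(P_{v_1,a_0})$ nearest to $v_1$, so that $d_G(u,v_1)=d_G(u,a_0)+q-2s$. I argue by contradiction, supposing neither $u$ nor $v$ distinguishes $v_1,v_2\in V(T_{a_{j'}})$. Expanding the two equalities $d_G(u,v_1)=d_G(u,v_2)$ and $d_G(v,v_1)=d_G(v,v_2)$ in the three subranges yields: for $j'\in[1,j-1]$, $d_G(a_{j'},v_2)$ equals both $q-2s-j'$ and $q+j'$, forcing $s=-j'<0$; for $j'\in[j+1,\ell-1]$, the same system forces $2s=2j'-2\ell+1$, whose right-hand side is odd; and for $j'=j$ (where $d_G(v,v_2)$ is a tree distance inside $T_{a_j}$ that introduces a second nonnegative LCA-depth $s'$), the equations combine to $s=-(j+s')<0$. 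In every subcase, either the non-negativity of $s$ or the parity of $\ell$ is violated.

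The principal obstacle lies in this last case, specifically writing $d_G(v,v_2)$ correctly when $v_2\in V(T_{a_j})$ (a tree distance rather than a cycle-traversing one) and arranging the cancellations so that oddness of $\ell$ produces the parity contradiction in the middle subrange. Once the distance identities are laid out, each subcase collapses at once.
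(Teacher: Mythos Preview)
Your proof is correct and follows the same decomposition as the paper: same-tree pairs via Lemma~\ref{Sedlar-1}, pairs with both masters outside $\{0,j\}$ via Lemma~\ref{add3} (where you verify that all three alternatives are vacuous when $j=\lfloor\ell/2\rfloor$ and $\ell$ is odd, exactly as the paper does), and a direct argument for the remaining case. The only real difference is in that last case: the paper dispatches it with a single inequality showing that $d_G(a_0,x)=d_G(a_0,y)$ forces $d_G(a_\gamma,x)>d_G(a_\gamma,y)$, tacitly identifying the resolving power of $a_0,a_\gamma$ with that of the leaves $u,v\in S$; your three-subcase analysis with the branch-point depths $s,s'$ is longer but makes that passage explicit, and in particular handles the subcase $j'=j$ (both vertices in the two active trees) where neither $u$ nor $v$ enjoys the simple offset identity $d_G(\cdot\,,v_k)=d_G(\cdot\,,a_0)+d_G(a_0,v_k)$. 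So the approaches are essentially the same, with yours supplying more of the bookkeeping.
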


\begin{proof}
By symmetry, let $a_0$ and $a_{\gamma}$ be two $S$-active vertices,  where $\gamma=\frac{\ell-1}{2}$.
 Let $x,y\in V(G)$ be two distinct vertices, and $a_i$ and $a_j$ ($i,j\in[0,\ell-1]$) be the masters of $x$ and $y$, respectively. By Lemma \ref{Sedlar-1}, let $i\neq j$. Moreover, if $\{i,j\} \cap \{0,\gamma\}\neq \emptyset$, say $i=0$ (the other cases can be discussed in the same way), then  under the condition of $d_{G}(a_0, x)=d_{G}(a_0,y)$ we deduce that
 $d_G(a_{\gamma}, x)= d(a_0,x)+\gamma= d(a_0,y)+\gamma=d_G(a_0, a_j)+d_G(a_j,y)+\gamma > d_G(a_j,y)+d_G(a_j, a_{\gamma})=d_G(a_{\gamma}, y)$ (since $d_G(a_0, a_j)>0$ and $d_G(a_j, a_{\gamma})< \gamma$). Therefore, we assume $i<j$ and $i,j \in [0,\ell-1]\setminus \{0,\gamma\}$. Then, by Lemma \ref{add3}, $x$ and $y$ can be distinguished by $\{a_0,a_{\gamma}\}$, and also by $S$.
\end{proof}

Let us now turn to a more general sufficient condition for a unicyclic graph $G$ satisfying $\mathrm{dim}(G)$=$L(G)$.

\begin{lemma} \label{ok-graph-1}
Let $G$ be a unicyclic graph with $|\mathcal{A}(S)|\geq 2$, where $S\in \mathcal{B}(G)$. Label $\mathcal{C}(G)$ normally. If there exists an integer $j\in [1, \lfloor\frac{\ell-1}{2}\rfloor ]$ such that $a_j\in \mathcal{A}(S)$ and $G\in \mathcal{O}(\ell,j) \cup \mathcal{E}(\ell,j)$, then $\mathrm{dim}(G)$=$L(G)$.
\end{lemma}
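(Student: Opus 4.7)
The plan is to prove $\mathrm{dim}(G) = L(G)$ by showing that $S$ itself is a metric generator of $G$; since $|S| = L(G)$ and Lemma \ref{necessary-0} already gives $\mathrm{dim}(G) \geq L(G)$, this suffices. To show that $S$ distinguishes every pair of distinct vertices $v_1, v_2$ of $G$, I let $a_{i'}, a_{j'}$ be their masters and split on the location of $i', j'$. Pairs with $i' = j'$ are handled by Lemma \ref{Sedlar-1}. When $G \in \mathcal{O}(\ell)$ with $j = \lfloor\frac{\ell-1}{2}\rfloor$, the $S$-active vertices $a_0, a_j$ realise a distance of $(\ell-1)/2$ on $\mathcal{C}$ and Lemma \ref{odd-cycle-1} finishes this sub-case outright.

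In every other situation, condition (i) of Definition \ref{def-new1} is in force, so by Lemma \ref{add1} any pair with $\{i', j'\} \cap \{0, j\} \neq \emptyset$ is distinguished by $S$. I may therefore assume $i', j' \in [0, \ell-1] \setminus \{0, j\}$ and, without loss of generality, $i' < j'$. The next step is Lemma \ref{add3}, which reduces the remaining pairs to its three failure scenarios (i), (ii), (iii). A routine observation lets me pass from distinguishment by $\{a_0, a_j\}$ to distinguishment by $S$: since $v_1, v_2 \notin V(T_{a_0}) \cup V(T_{a_j})$, the distance from any leaf in $S \cap V(T_{a_0})$ (resp. $S \cap V(T_{a_j})$) to $v_r$ differs from $d_G(a_0, v_r)$ (resp. $d_G(a_j, v_r)$) by the same additive constant, so the distinguishment transfers.

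The bulk of the proof is then a case analysis on the failure scenarios. For failure case (i), condition (ii) of Definition \ref{def-new1} forces $i' \in [j+1, \lfloor\frac{\ell-2}{2}\rfloor]$, so $T_{a_{i'}}$ is trivial and $m_1 = 0$; the identity $m_1 - m_2 = j' - i' > 0$ then yields $m_2 < 0$, a contradiction. Failure case (iii) is handled in parallel by pushing $j'$ (or $i'$) into the other trivial interval provided by condition (ii); sub-case (iii.a) and the even-$\ell$ part of (iii.b) collapse this way, while in the odd-$\ell$ pieces I additionally use that $\ell = 2(i' - j)$ has no integer solution when $\ell$ is odd. The boundary instances where condition (ii) is vacuous (namely $j = \lfloor\frac{\ell-2}{2}\rfloor$) need a separate check, but there the range constraints on $i', j'$ in Lemma \ref{add3} contradict $i' < j' \leq \ell-1$ directly.

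The genuine obstacle is failure case (ii), which only arises for even $\ell$. Its defining equations simplify to $m_1 - m_2 = \ell/2 - j$, and condition (ii) of Definition \ref{def-new1} now supplies no restriction on $T_{a_{i'}}$ since $i' \in [1, j-1]$. I would resolve this by appealing to condition (iii): if some $T_{a_k}$ with $k \in [1, j-1]$ contains a branching vertex, then, taking a deepest such vertex (which necessarily has at least two attached threads), the branch-resolving property forces $S \cap V(T_{a_k}) \neq \emptyset$, and any such vertex distinguishes $v_1$ from $v_2$ by Lemma \ref{add2}. Otherwise every $T_{a_k}$ with $k \in [1, j-1]$ is a path, and condition (iii) bounds its length by $\frac{\ell-2}{2} - j$, whence $m_1 \leq \frac{\ell-2}{2} - j$ forces $m_2 \leq -1$, again a contradiction. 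This exhausts all scenarios, so $S$ is a metric generator and $\mathrm{dim}(G) = L(G)$.
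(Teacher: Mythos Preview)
Your proposal is correct and follows essentially the same approach as the paper: reduce to showing that $S$ is a metric generator, dispose of the case $j=\lfloor(\ell-1)/2\rfloor$ via Lemma \ref{odd-cycle-1}, use Lemmas \ref{Sedlar-1} and \ref{add1} to assume $i',j'\notin\{0,j\}$, pass to the three failure scenarios of Lemma \ref{add3}, and eliminate them using conditions (ii) and (iii) of Definition \ref{def-new1} together with Lemma \ref{add2}. Your write-up is slightly more explicit than the paper's in a few places (the additive-constant passage from $\{a_0,a_j\}$ to $S$, the parity split inside scenario (iii), and the boundary case $j=\lfloor(\ell-2)/2\rfloor$), but the argument is the same.
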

\begin{proof}
By Lemma \ref{necessary-0},  it suffices to prove that $S$ is a metric generator. If $\ell\equiv 1$ (mod 2) and $j=\frac{\ell-1}{2}$, then the conclusion follows from Lemma \ref{odd-cycle-1}. So, assume that $1\leq j\leq \lfloor\frac{\ell-2}{2}\rfloor$.
Let $x,y$ be two distinct vertices in $G$, whose masters are $a_{i'}$ and $a_{j'}$ ($i',j' \in [0,\ell-1]$), respectively. By Lemmas \ref{Sedlar-1} and \ref{add1}, suppose that $i'\neq j'$ (say $i'<j'$) and $\{i',j'\}\cap \{0,j\}=\emptyset$. Let $u\in V(T_{a_0})\cap S$, $v \in V(T_{a_j})\cap S$, $m_1= d_G(x, a_{i'})$, and $m_2=d_G(y,a_{j'})$. In the below, we  suppose
\begin{equation}\label{equ2-1}
d_{G}(u, x)=d_{G}(u,y)~~and~~ d_{G}(v, x)=d_{G}(v,y), ~~i.e., d_{G}(a_0, x)=d_{G}(a_0, y) ~~and ~~d_{G}(a_j, x)=d_{G}(a_j, y)
\end{equation}
By Lemma \ref{add3}, one of the following conditions hold: (i) $j'\in [j+1, \lfloor\frac{\ell}{2}\rfloor]$, $i'>j$, and $m_1-m_2=j'-i'$; (ii) $j'\in [\lfloor\frac{\ell}{2}\rfloor+1, \lfloor\frac{\ell}{2}\rfloor+j]$, $i'<j$, $\ell=2i'+2j'-2j$, and $m_1-m_2=\ell-j'-i'$; (iii) $j'\in [\lfloor\frac{\ell}{2}\rfloor+j+1, \ell-1]$, $\ell=2(i'-j)$, and $m_1-m_2=i'-j'$; or  $i'> \lfloor\frac{\ell}{2}\rfloor+j$ and $m_1-m_2=i'-j'$.

For (i), by $G\in \mathcal{O}(\ell,j) \cup \mathcal{E}(\ell,j)$ (Definition \ref{def-new1} (ii)), we deduce that $m_1=0$ and $m_2=i'-j'<0$, a contradiction.
For (ii), we have that $\ell\equiv 0$ (mod 2) and $m_1=\frac{\ell}{2}-j+m_2 \geq \frac{\ell}{2}-j$. Therefore, by  $G\in \mathcal{E}(\ell,j)$ (Definition \ref{def-new1} (iii)),  there exists a vertex $k\in [1,j-1]$ such that $T_{a_k}$ contains a branching vertex. So, $S\cap V(T_{a_k})\neq \emptyset$, say $w\in S\cap V(T_{a_k})$. By Lemma \ref{add2}, $x$ and $y$ can be distinguished by $w$.
For (iii), we have that $j'\geq \lceil\frac{\ell+2}{2}\rceil+j$ and $m_2=j'-i'+m_1>0$ (which implies that $|V(T_{a_{j'}})|>1$), a contradiction to $G\in \mathcal{E}(\ell,j) $ (Definition \ref{def-new1} (ii)).

\end{proof}

Based on Lemma \ref{ok-graph-1}, we can obtain one of our main results (for metric dimension) as follow.

\begin{theorem} \label{two}
Suppose that $G$ is a unicyclic graph with $|\mathcal{A}(S)|\geq 2$, where  $S\in \mathcal{B}(G)$. Label $\mathcal{C}(G)$ normally and let $j = \max \{i: a_i \in \mathcal{A}(S)\}$. Then, $S$ is a metric generator iff there are three $S$-active vertices forming a geodesic triple or  $G\in \mathcal{O}(\ell,j) \cup \mathcal{E}(\ell,j)$.
\end{theorem}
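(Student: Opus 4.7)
The plan is to prove the biconditional in two directions, with the backward (sufficient) direction short and the forward (necessary) direction carrying most of the work.

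For sufficiency, the two cases on the right-hand side each invoke an earlier result. If three $S$-active vertices form a geodesic triple, Lemma \ref{geodesic-triple} yields that $S$ is a metric generator. Otherwise $G\in\mathcal{O}(\ell,j)\cup\mathcal{E}(\ell,j)$, and then Definition \ref{def-new1} forces $j\in[1,\lfloor(\ell-1)/2\rfloor]$; combining $a_j\in\mathcal{A}(S)$ (by the choice of $j$) with $a_0\in\mathcal{A}(S)$ (by normal labeling), Lemma \ref{ok-graph-1} applies directly.

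For necessity, I would assume $S$ is a metric generator while no three $S$-active vertices form a geodesic triple, and argue $G\in\mathcal{O}(\ell,j)\cup\mathcal{E}(\ell,j)$. The observation about normal labelings preceding Lemma \ref{necessary-0} forces $j\le\lfloor\ell/2\rfloor$, and when $\ell$ is even with $j=\ell/2$ it forces $\mathcal{A}(S)=\{a_0,a_j\}$. This last scenario is ruled out by noting that for every $u\in S\cap V(T_{a_0})$ one has $d_G(u,a_1)=d_G(u,a_0)+1=d_G(u,a_{\ell-1})$, and for every $v\in S\cap V(T_{a_j})$ one has $d_G(v,a_1)=d_G(v,a_j)+(j-1)=d_G(v,a_{\ell-1})$, so $a_1$ and $a_{\ell-1}$ are indistinguishable, contradicting the assumption on $S$. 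Hence $j\in[1,\lfloor(\ell-1)/2\rfloor]$; the case $\ell$ odd with $j=(\ell-1)/2$ gives $G\in\mathcal{O}(\ell,j)$ immediately by Definition \ref{def-new1}, so I may assume $j\in[1,\lfloor(\ell-2)/2\rfloor]$.

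It then remains to verify conditions (i), (ii), and (if $\ell$ is even) (iii) of Definition \ref{def-new1}, which I would do by contrapositive. If (i) fails, say $a_0$ is bad with thread $u_1\cdots u_k$: pair $u_k$ with a vertex $w$ at distance $k$ from $a_0$ lying in another branch of $T_{a_0}$ (which exists because $d_G(a_0)\ge 4$) and check that $S$ fails to distinguish them, using the fact that every $S$-vertex outside $T_{a_0}$ sees $u_k$ and $w$ symmetrically through $a_0$. If (ii) fails, Lemma \ref{add3} Case (i) (or the corresponding subcase of (iii)) produces a pair of vertices that $\{a_0,a_j\}$ cannot distinguish, with both endpoints outside $T_{a_0}\cup T_{a_j}$; because no other $a_k$ in the indicated index range is $S$-active, no further $S$-vertex reaches them helpfully. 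If (iii) fails, the hypothesis that no $T_{a_k}$ ($k\in[1,j-1]$) contains a branching vertex, together with $S$ consisting only of $1$-vertices, forces $S\cap V(T_{a_k})=\emptyset$ for all such $k$; Lemma \ref{add3} Case (ii) then supplies a pair that $\{a_0,a_j\}$ fails to distinguish, and no other $S$-vertex lies where it could save the situation.

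The principal obstacle is this last step, because in each failure scenario one must rule out every candidate $S$-vertex, not only $\{a_0,a_j\}$, handled uniformly by Lemma \ref{add3}, but also any active $T_{a_k}$ with $k\notin\{0,j\}$. The technique is to combine the branch-resolving structure (which pins down where $S$-vertices can sit) with Lemma \ref{add2}'s distance geometry read as a necessary constraint, locating the indistinguishable pair deep enough in the offending subtree that no internal landmark resolves it.
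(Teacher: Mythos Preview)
Your overall structure matches the paper's: sufficiency via Lemmas \ref{geodesic-triple} and \ref{ok-graph-1}, then necessity by ruling out $j=\lfloor\ell/2\rfloor$ in the even case and verifying conditions (i)--(iii) of Definition \ref{def-new1} by contrapositive. Your handling of (ii) and (iii) is essentially the paper's, phrased through Lemma \ref{add3}; it works because every $s\in S$ lies in some $T_{a_{j'}}$ with $j'\in[0,j]$, so the distance from $s$ to either candidate vertex factors as $d_G(s,a_{j'})$ plus a cycle distance that is the same for both candidates.

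The genuine gap is your treatment of condition (i). You propose to pair the thread leaf $u_k$ with a vertex $w$ at distance $k$ from $a_0$ in another branch of $T_{a_0}$, and you only check that $S$-vertices \emph{outside} $T_{a_0}$ fail to separate them. But $a_0$ is $S$-active, so there are $S$-vertices \emph{inside} $T_{a_0}$, and every branch of $T_{a_0}$ other than the chosen $S$-free thread must contain one (a thread because $S$ is branch-resolving; a non-thread because its deepest $3^+$-vertex has at least two threads). If $s\in S$ lies in the same branch as $w$, then the meet $m$ of $s$ and $w$ in $T_{a_0}$ satisfies $d_G(a_0,m)\ge 1$, giving $d_G(s,w)=d_G(s,a_0)+k-2d_G(a_0,m)<d_G(s,a_0)+k=d_G(s,u_k)$, so $s$ distinguishes the pair and your argument collapses. (There is also no guarantee that a vertex at distance exactly $k$ even exists in another branch.)

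The paper avoids this by pairing the first thread vertex $u$ (adjacent to $a_0$) not with anything inside $T_{a_0}$ but with the cycle vertex $a_{\ell-1}$. Because $j\le\lfloor(\ell-2)/2\rfloor$, every $a_{j'}$ with $j'\in[0,j]$ satisfies $d_G(a_{j'},a_{\ell-1})=j'+1$, hence for every $s\in S\cap V(T_{a_{j'}})$ one has $d_G(s,a_{\ell-1})=d_G(s,a_{j'})+j'+1=d_G(s,a_0)+1=d_G(s,u)$, using that $s$ is not on the $S$-free thread. This single computation handles all of $S$ at once, including the $S$-vertices inside $T_{a_0}$ that break your construction.
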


\begin{proof}
The sufficiency follows from Lemmas \ref{geodesic-triple} and \ref{ok-graph-1}. Suppose that $S$ is a metric generator. We also suppose that there are not three $S$-active vertices forming a geodesic triple, by which we derive $j\leq \lfloor\frac{\ell}{2}\rfloor$.
In particular, when $j=\lfloor\frac{\ell}{2}\rfloor$ and $G\in \mathcal{E}(\ell)$, it has that $\mathcal{A}(S)=\{a_0,a_j\}$ and $d_G(s, a_i)=d_G(s, a_{\ell-i})$ for every $i\in[1, \frac{\ell-2}{2}]$ and every $s\in S$, a contradiction. In the below, we assume  that  $j\leq \lfloor\frac{\ell-2}{2}\rfloor$ and we will prove that $G\in \mathcal{O}(\ell,j)\cup \mathcal{E}(\ell,j)$.

Suppose, to the contrary, that $G\notin \mathcal{O}(\ell,j)$ (resp. $G\notin\mathcal{E}(\ell,j)$). Then,  at least one of the following conditions (i) and (ii) (resp. (i), (ii), and (iii)) holds.

 \vspace{0.01cm}

(i) $a_0$ or $a_j$ is a bad vertex;

 \vspace{0.01cm}

(ii) When $ j<\lfloor\frac{\ell-2}{2}\rfloor$, there exists a $k\in (\{\ell-1, \ldots, \ell-(\lfloor\frac{\ell-2}{2}\rfloor-j)\}\cup \{j+1,\ldots, j+(\lfloor\frac{\ell-2}{2}\rfloor-j)$ such that $|V(T_{a_k})|>1$.

 \vspace{0.01cm}

(iii) When $j>1$ and $T_{a_{k'}}$ contains no branching vertex for every $k'\in [1,j-1]$,  there exists some $k\in [1,j-1]$ such that the length of $T_{a_{k}}$ is  at least $\frac{\ell}{2}-j$.

 \vspace{0.01cm}

For (i), suppose that $a_0$  is a bad vertex (the case for $a_j$ can be discussed similarly). By the selection of $S$, there is a thread $T'$ attached to $a_0$ which contains no vertex of $S$. Let $u\in V(T')$ be the vertex adjacent to $a_0$. Then,  $d_G(u, s)=1+j'+d_G(a_{j'},s)=d_G(a_{\ell-1}, s)$ for any $s\in S\cap (\cup_{0\leq j'\leq j} V(T_{a_{j'}}))$. So, $S$ can not distinguish  $u$ and $a_{\ell-1}$, a contradiction.

For (ii), consider the vertex $u\in V(T_{a_k})$ that is adjacent to $a_k$. If $k\in \{\ell-1, \ldots, \ell-(\lfloor \frac{\ell-2}{2}\rfloor-j)\}$, then $k-1\geq \lceil\frac{\ell}{2}\rceil+j$,  and hence  $d_G(u, s)=1+ (\ell-k)+j' + d_G(a_{j'},s)= d_G(a_{k-1}, s)$ for any  $s\in S\cap (\cup_{0\leq j'\leq j} V(T_{a_{j'}}))$. So, $S$ can not distinguish $u$ and $a_{\ell-1}$, a contradiction.  If $k \in\{j+1,\ldots, j+(\lfloor\frac{\ell-2}{2}\rfloor-j)$, then  $k+1\leq \lfloor\frac{\ell}{2}\rfloor$,  and hence $d_G(u, s)=1+ (k-j') + d_G(a_{j'},s)= d_G(a_{k+1}, s)$ for any $s\in S\cap (\cup_{0\leq j'\leq j} V(T_{a_{j'}}))$. So, $S$ can not distinguish $u$ and $a_{\ell-1}$, a contradiction.

For (iii), it is clear that $S$ contains no vertex of $T_{a_{j'}}$ for any $j'\in [1,j-1]$, i.e., $S$ contains only vertices in $V(T_{a_0})\cup V(T_{a_j})$.  Let $u\in V(T_{a_k})$ be the vertex that has distance $\frac{\ell}{2}-j$ with $a_k$.
Consider the vertex $a_{\frac{\ell}{2}+j-k}$. Since $\frac{\ell}{2}+j-k>\frac{\ell}{2}$ and $\frac{\ell}{2}+j-k-j= \frac{\ell}{2}-k <\frac{\ell}{2}$,
we have that $d_G(a_{\frac{\ell}{2}+j-k}, s) = \frac{\ell}{2}-j+k + d(a_0,s) =d_G(u, a_0)$  for any $s \in S\cap V(T_{a_0})$ and   $d_G(a_{\frac{\ell}{2}+j-k}, s')  =(\frac{\ell}{2}-j)+(j-k)+ d_G(a_j,s') = d_G(u, a_j)$  for any $s' \in S\cap V(T_{a_j})$. S, $S$ can not distinguish $u$ and $a_{\frac{\ell}{2}+j-k}$, a contradiction.

\end{proof}


Observe that by Lemma \ref{necessary-0} every metric generator $S'$ is a branch-resolving set with $|\mathcal{A}(S')| \geq 2$. In addition, for every two distinct vertices $a_p, a_q \in V(\mathcal{C})$ we can always find a  vertex $a_r \in V(\mathcal{C}) \setminus \{a_p,a_q\}$ such that $a_p,a_q,a_r$ form a geodesic triple.
 So, 2 $\leq$ dim($G$)$\leq 3$ when $L(G)=0$,  $L(G)+1 \leq$ dim($G$)$\leq L(G)+2$ when $L(G)=1$, and  $L(G) \leq$ dim($G$)$\leq L(G)+1$ when $L(G)\geq 2$. So, the  following result holds by Lemma \ref{two}.

\begin{corollary} \label{th1}
Let $G$ be a unicyclic graph and $S\in \mathcal{B}(G)$. Label $\mathcal{C}(G)$ normally when $|\mathcal{A}(S)|\geq 2$ and let $j = \max \{i: a_i \in \mathcal{A}(S)\}$.  If $G\in \mathcal{O}(\ell)$, then

\vspace{0.1cm}
(1) When $\mathcal{A}(S)=\emptyset$,  $\mathrm{dim}(G)=2;$
\vspace{0.1cm}

(2) When $|\mathcal{A}(S)|=1$, $\mathrm{dim}(G)=L(G)+1;$
\vspace{0.1cm}

(3) When $|\mathcal{A}(S)|\geq 2$, if there are three $S$-active vertices forming a geodesic triple or $G\in \mathcal{O}(\ell,j)$, then $\mathrm{dim}(G)=L(G)$; otherwise, $\mathrm{dim}(G)=L(G)+1$.

Moreover, if $G\in \mathcal{E}(\ell)$, then

\vspace{0.1cm}
(1) When $\mathcal{A}(S)=\emptyset$,  if there are two vertices on $\mathcal{C}$ (say $a_0$ and $a_k$)  such that $G\in \mathcal{E}(\ell,k)$, then dim($G$)=2; otherwise dim($G$)=3.

(2) When $|\mathcal{A}(S)|=1$, let $\mathcal{A}(S)=\{a_{k}\}$.  If there exists a vertex $a_{k'} \in V(\mathcal{C})\setminus \{a_k\}$  such that $G\in \mathcal{E}(\ell,k')$ (let $k=0$) or $G\in \mathcal{E}(\ell,k)$ (let $k'=0$), then  dim(G)=$L(G)+1$; otherwise, dim(G)=$L(G)$+2.

(3) When $|\mathcal{A}(S)|\geq 2$, if there are three $S$-active vertices forming a geodesic triple or  $G\in \mathcal{O}(\ell,j)$, then dim(G)=$L(G)$; otherwise, dim(G)=$L(G)+1$.
\end{corollary}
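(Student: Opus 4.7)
The proof plan is to deduce the corollary by combining Theorem \ref{two} (which characterizes exactly when a branch-resolving set $S$ with $|\mathcal{A}(S)|\geq 2$ is a metric generator) with Lemma \ref{necessary-0} (forcing every metric generator to be branch-resolving with at least two active vertices) and the preamble bounds $L(G)\leq \mathrm{dim}(G)\leq L(G)+2$. The argument splits cleanly according to the parity of $\ell$ and the size of $\mathcal{A}(S)$.

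For item (3) in both parts, where $|\mathcal{A}(S)|\geq 2$, the claim is essentially a re-statement of Theorem \ref{two}. If the hypothesis (a geodesic triple among $S$-active vertices, or $G\in\mathcal{O}(\ell,j)\cup\mathcal{E}(\ell,j)$) holds, then $S$ itself has size $L(G)$ and is a metric generator, so $\mathrm{dim}(G)\leq L(G)$, matching the trivial lower bound. Otherwise Theorem \ref{two} gives $\mathrm{dim}(G)\geq L(G)+1$; for the matching upper bound I would add a single cycle vertex $a_r$ chosen so that $a_0, a_j, a_r$ form a geodesic triple (always possible, as noted in the preamble), and then apply Lemma \ref{geodesic-triple} to $S\cup\{a_r\}$.

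For $|\mathcal{A}(S)|\leq 1$, Lemma \ref{necessary-0} gives the lower bound $\mathrm{dim}(G)\geq L(G)+(2-|\mathcal{A}(S)|)$. In the odd case (parts (1) and (2) of $\mathcal{O}(\ell)$), I would augment $S$ to a branch-resolving set $S'$ of minimum possible size containing two $S'$-active cycle vertices at cycle distance $\tfrac{\ell-1}{2}$; Lemma \ref{odd-cycle-1}, whose proof only uses that $S'$ is branch-resolving, then certifies $S'$ as a metric generator of the asserted size. In the even case (parts (1) and (2) of $\mathcal{E}(\ell)$), no such universal construction exists, so the dichotomy in the statement arises: $\mathrm{dim}(G)$ takes the smaller value exactly when some cycle-vertex pair witnesses $G\in\mathcal{E}(\ell,k)$. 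Sufficiency of such a witness comes from Lemma \ref{ok-graph-1} applied to $S$ together with the extra cycle vertex(ices); necessity comes from applying Theorem \ref{two} to an arbitrary hypothetical metric generator $S^\ast$ of the smaller size, after relabeling $\mathcal{C}$ normally with respect to $S^\ast$.

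The main obstacle will be the converse direction in the even case with $|\mathcal{A}(S)|\leq 1$: showing that if no cycle pair witnesses $G\in\mathcal{E}(\ell,k)$, then no metric generator of the smaller asserted size exists. The subtlety is that a hypothetical minimum metric generator $S^\ast$ need not extend the fixed $S$, so the normal labeling with respect to $S^\ast$ yields a potentially different pair $(a_0,a_{j^\ast})$ and a potentially different distinguished component of $G-E(\mathcal{C})$ carrying the extra active vertex. One must argue that the structural condition ``$G\in\mathcal{E}(\ell,k)$ for some cycle pair'' is intrinsic to $G$ rather than to the chosen $S$, so that the dichotomy stated in the corollary is well posed and really does determine $\mathrm{dim}(G)$. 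This is essentially a ``rearrangement'' argument showing that if $S^\ast$ certifies membership in $\mathcal{E}(\ell,j^\ast)$ for some labeling, then one can transfer that certificate back to a cycle pair compatible with the fixed $S$.
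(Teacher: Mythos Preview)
Your plan matches the paper's approach exactly: the paper's entire argument is the short preamble paragraph before the corollary, which establishes the two-sided bounds via Lemma~\ref{necessary-0} and the geodesic-triple trick (Lemma~\ref{geodesic-triple}) and then defers everything to Theorem~\ref{two}. For item~(3) in both parities you are identical to the paper; for items~(1)--(2) you are simply more explicit than the paper, which does not spell these out at all.

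Your diagnosis of the converse in the even case with $|\mathcal{A}(S)|\leq 1$ as the delicate point is correct (the paper glosses over it), but your proposed ``rearrangement'' between labelings is more elaborate than necessary and slightly misaimed. The cleaner route is this: $\mathcal{A}(S)$ is the \emph{same} for every minimum branch-resolving set---it is the set of cycle vertices $a_i$ whose tree $T_{a_i}$ contains some $v$ with $|\mathcal{S}(v)|\geq 2$---so a straight count shows that any metric generator $S^\ast$ of size $L(G)+(2-|\mathcal{A}(S)|)$ must have $\mathcal{A}(S^\ast)=\mathcal{A}(S)$ together with exactly the one or two extra cycle vertices that the corollary quantifies over. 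You then re-run the \emph{necessity} half of Theorem~\ref{two} directly for $S^\ast$ under its own normal labelling: the undistinguished pairs exhibited in cases~(ii) and~(iii) there are undistinguished by \emph{every} vertex whose master lies in $\{a_0,\dots,a_j\}$, so minimality and leaf-valuedness of $S$ are irrelevant; and case~(i) goes through once ``a thread at $a_0$ free of $S$'' is relaxed to ``a component of $T_{a_0}-a_0$ free of $S^\ast$'', which exists because a bad $a_0$ has at least two such components while $S^\ast$ meets $T_{a_0}$ in only the ways branch-resolving already forces (plus at most one extra vertex). No transfer of certificates between different choices of $S$ is needed.
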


\section{Edge Metric Dimension}

The previous section has shown that the metric dimension of unicyclic graphs can be exactly determined by the aid of odd- and even-$(\ell, j)$ graphs. This section moves on to consider the edge metric dimension of unicyclic graphs. Before proceeding further, it is important to introduce the other two families of graphs.

\begin{definition}\label{def-new2}
Suppose that $G$ is a unicyclic graph of length  $\ell (\geq 3)$ and $j\in [1, \lfloor\frac{\ell-1}{2}\rfloor]$. If $G \in \mathcal{O}(\ell)$ satisfies the following condition (i), (ii), and (iii), then we call $G$ an \emph{edge-odd-$(\ell,j)$} graph; if $G \in \mathcal{O}(\ell)$ satisfies the following conditions (i), (ii), and (iv), then we call $G$ an \emph{edge-even-$(\ell,j)$} graph.

\vspace{-0.2cm}
\begin{description}
\item[(i)] Neither $a_0$ nor $a_j$ is a bad vertex;
  \vspace{-0.2cm}
\item[(ii)] When $j< \lfloor\frac{\ell-1}{2}\rfloor$, $V(T_{a_k})=\{a_k\}$ for every $k\in \{\ell-1, \ldots, \ell-(\lfloor\frac{\ell-1}{2}\rfloor-j)\}\cup \{j+1,\ldots, j+(\lfloor\frac{\ell-1}{2}\rfloor-j)\}$. 
  \vspace{-0.2cm}
\item[(iii)] When $j >1$ and  $T_{a_k}$ contains no branching vertex for every $k\in [1,j-1]$,  then the length of $T_{a_k}$ is at most $\frac{\ell-1}{2}-j$.
  \vspace{-0.6cm}
  \item[(iv)] When $j >1$ and both $T_{a_k}$ and $T_{a_{k'}}$ contain no branching vertex for every $k\in [1,j-1]$ and $k'=\frac{\ell}{2}+j-k$, if $|V(T_{a_{k'}})|>1$, then the length of  $T_{a_k}$ is at most $\frac{\ell}{2}-j$.
\end{description}
\vspace{-0.2cm}
We use $\mathcal{O}_e(\ell,j)$ and $\mathcal{E}_e(\ell,j)$  to denote the set of all edge-odd-$(\ell,j)$ graphs and edge-even-$(\ell,j)$ graphs, respectively. See Figure \ref{example-2} for some examples of edge-odd-$(\ell,j)$ graphs and edge-even-$(\ell,j)$ graphs.
\end{definition}

\begin{figure}[H]
  \centering
  \includegraphics[width=12cm]{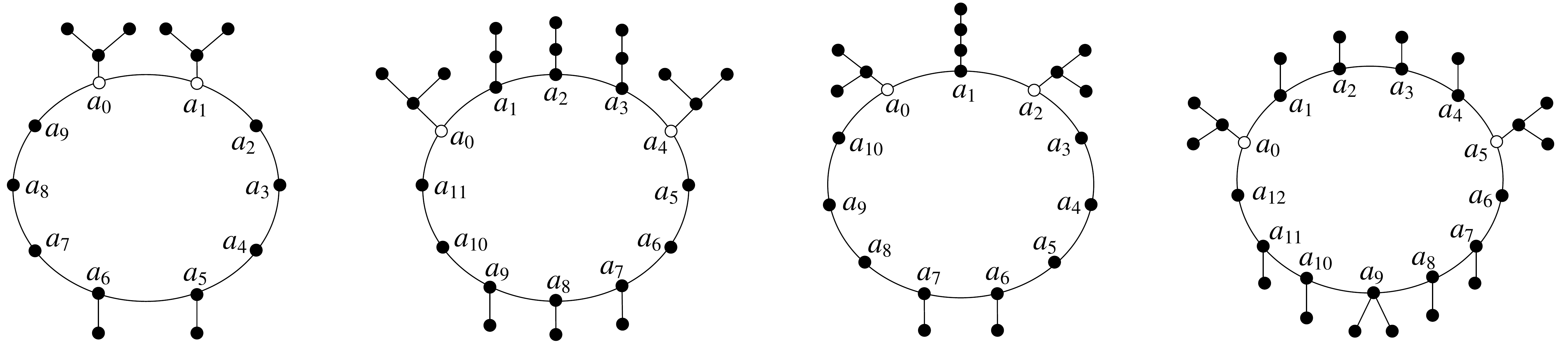}\\
  (a)\hspace{2.7cm} (b)\hspace{2.7cm} (c)\hspace{2.7cm} (d)
  \caption{
  (a) Edge-even-(10,1); (b)  edge-even-(12,4); (c)  edge-odd-(11,2); (d)  edge-odd-(13,5)}\label{example-2}
\end{figure}

Let us now turn to the discussion of the edge metric dimension of unicyclic graphs based on edge-odd- and edge-even-graphs.
Although the conclusions (and their proofs) in this section  are analogous as those in Section \ref{sec-1}, there are still a number of important differences between them. So, we fully describe them  as well.

\begin{lemma} \label{ok-graph-1-0}
Let $G$ be a unicyclic graph with $|\mathcal{A}(S)|\geq 2$, where $S\in \mathcal{B}(G)$. Label $\mathcal{C}(G)$ normally. If there exists a $j\in [1, \lfloor\frac{\ell-1}{2}\rfloor]$ such that $a_j\in \mathcal{A}(S)$ and $G\in \mathcal{O}_e(\ell,j)\cup \mathcal{E}_e(\ell,j)$, then $\mathrm{edim}(G)$=$L(G)$.
\end{lemma}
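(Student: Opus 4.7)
The plan is to follow the template of Lemma~\ref{ok-graph-1}, replacing vertex pairs by edge pairs and invoking Definition~\ref{def-new2} in place of Definition~\ref{def-new1} at each structural step. By Lemma~\ref{necessary-0}, it suffices to prove that $S$ is an edge metric generator, so I would take two arbitrary distinct edges $e_1, e_2 \in E(G)$ and try to produce a vertex of $S$ distinguishing them.

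First I would dispose of the easy reductions. Lemma~\ref{Sedlar-1} distinguishes any pair lying inside a single $T_{a_i}$. Because condition~(i) of Definition~\ref{def-new2} guarantees that neither $a_0$ nor $a_j$ is bad, Lemma~\ref{add1} then handles every pair in which at least one edge belongs to $E(T_{a_0})\cup E(T_{a_j})$ or is a cycle edge incident to these trees (Lemma~\ref{add1} explicitly treats the case $e_2 \in E(\mathcal{C})$). This reduces to the case where $e_1, e_2$ have tree-masters $a_{i'}, a_{j'}$ respectively with $\{i',j'\}\cap\{0,j\}=\emptyset$ and, by symmetry, $i'<j'$, with cycle edges handled by assigning them a master at an endpoint and repeating the same computation.

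Assuming no vertex of $S$ distinguishes $e_1$ from $e_2$, in particular $a_0$ and $a_j$ do not, and Lemma~\ref{add3} forces one of its three configurations on $(i',j')$ together with an arithmetic relation between $m_r = d_G(a_{\cdot},e_r)$. In configurations (i) and (iii), Definition~\ref{def-new2}(ii) forces the relevant $T_{a_{i'}}$, $T_{a_{j'}}$ to be singletons, so both $e_1$ and $e_2$ must be cycle edges; combining this with the arithmetic constraint and $m_1, m_2 \geq 0$ leads to a contradiction in each case. Configuration~(ii) occurs only for even $\ell$ and yields $m_1 = \ell/2 - j + m_2$. I would then split on whether some $T_{a_k}$ with $k \in [1,j-1]$ contains a branching vertex (and hence, by $S \in \mathcal{B}(G)$, a vertex of $S$): if so, Lemma~\ref{add2} supplies a distinguishing vertex; otherwise the symmetric index $k' = \ell/2 + j - i' = j'$ together with Definition~\ref{def-new2}(iv) forces $\mathrm{length}(T_{a_{i'}}) \leq \ell/2 - j$ whenever $|V(T_{a_{j'}})|>1$, which contradicts $m_1 \geq \ell/2 - j + m_2 > \ell/2 - j$.

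The main obstacle will be configuration~(ii) in the even case. Unlike the metric-dimension argument, where condition~(iii) of Definition~\ref{def-new1} gives an unconditional one-sided length bound, the edge analogue in Definition~\ref{def-new2}(iv) is a conditional, paired bound that only engages when both $T_{a_k}$ and $T_{a_{k'}}$ are branching-free and the mirror tree is non-trivial. I will need to carefully track the boundary case $m_2=0$ (where $e_2$ might lie on $\mathcal{C}$ at $a_{j'}$), distinguish it from $m_2>0$, and also reconcile the discrepancy between the index range using $\lfloor(\ell-2)/2\rfloor$ in Definition~\ref{def-new1}(ii) and $\lfloor(\ell-1)/2\rfloor$ in Definition~\ref{def-new2}(ii), since the latter allows one extra index that must be matched against the arithmetic coming out of Lemma~\ref{add3}.
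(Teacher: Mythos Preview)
Your outline is essentially correct for the tree-edge versus tree-edge case (the paper's Case~2), and your treatment of configurations (i), (iii) via Definition~\ref{def-new2}(ii) and of configuration~(ii) via Lemma~\ref{add2}/Definition~\ref{def-new2}(iv) matches the paper. The gap is in how you handle cycle edges.

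Lemma~\ref{add3} is stated only for $e_1\in E(T_{a_{i'}})$ and $e_2\in E(T_{a_{j'}})$; it does not cover $e_1\in E(\mathcal{C})$, and ``assigning a master at an endpoint'' does not recover it, because for a cycle edge the two endpoints can lie on opposite sides of $a_0$ and $a_j$ and the distance formulas split differently. The paper treats $e_1=a_{i'}a_{i'+1}\in E(\mathcal{C})$, $e_2\in E(T_{a_{j'}})$ as a separate Case~1 with its own computations. The critical subcase is $j'\in[1,j-1]$ and $\lfloor\tfrac{\ell-1}{2}\rfloor< i'\le \lfloor\tfrac{\ell-1}{2}\rfloor+j$: equating distances forces $\ell=2i'+2j'-2j+1$ (so $\ell$ is \emph{odd}) and $m_2=\tfrac{\ell-1}{2}-j$. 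This is exactly where condition~(iii) of Definition~\ref{def-new2} is needed, and you never invoke it; your plan only uses (i), (ii), (iv). If no $T_{a_k}$, $k\in[1,j-1]$, has a branching vertex, condition~(iii) gives the length contradiction. If some $T_{a_k}$ does contain a branching vertex (hence a vertex $w\in S$), one still cannot apply Lemma~\ref{add2} directly, since one of the two objects is a cycle edge rather than a tree edge or vertex. The paper handles this with a subdivision trick: subdivide $e_1$ to obtain $G'\in\mathcal{E}(\ell+1)$, so the cycle edge becomes a new vertex $a'$; then Lemma~\ref{add2} in $G'$ distinguishes $a'$ from $w_2$ (the far endpoint of $e_2$), and a short translation shows $d_G(w,e_1)\neq d_G(w,e_2)$. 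Without this device or an equivalent direct computation, your argument does not close.

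A secondary point: to make Definition~\ref{def-new2}(iv) fire in configuration~(ii), you also need that every $T_{a_{k'}}$ with $k'=\tfrac{\ell}{2}+j-k$ is branching-free. The paper secures this at the outset by observing that if any $a_r$ with $r>\lfloor\tfrac{\ell-1}{2}\rfloor$ were $S$-active then $a_0,a_j,a_r$ would form a geodesic triple; you should add this reduction before invoking~(iv).
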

\begin{proof}
By Lemma \ref{necessary-0}, it suffices to show that $S$ is an edge metric generator.  Observe that $\mathcal{C}$ is labelled normally. If there exists $r=\min\{i|i>\lfloor\frac{\ell-1}{2}\rfloor$ and $a_i$ is  $S$-active\},  then $a_0,a_j,a_r$ form a geodesic triple and $S$ is an edge metric generator of $G$ by Lemma \ref{geodesic-triple}. Therefore, we assume that $T_{a_{r}}$ contains no branching vertex  for every $\lfloor\frac{\ell-1}{2}\rfloor < r \leq \ell-1$.

Let $e_1,e_2$ be an arbitrary pair of edges in $G$.
Clearly, if both $e_1\in E(\mathcal{C})$ and $e_2\in E(\mathcal{C})$, then $e_1$ and $e_2$ can be distinguished by by $\{a_0, a_j\}$, and also by $S$.  By Lemmas \ref{Sedlar-1} and \ref{add1}, $e_1$  and $e_2$ can be distinguished by $S$ if $e_1$ and $e_2$ belong to the same component of $G-E(\mathcal{C})$ or $\{e_1,e_2\}\cap (E(T_{a_0})\cup E(T_{a_j}))\neq \emptyset$.
In the below, we assume that  $e_2\in E(T_{a_{j'}})$ for some $j'\in [0, \ell-1]$, $\{e_1,e_2\}\cap (E(T_{a_0})\cup E(T_{a_j}))=\emptyset$ (which implies that $j'\notin \{0,j\}$), and there does not exist any $i\in [0, \ell-1]$ such that $\{e_1,e_2\}\subseteq E(T_{a_i})$.  Let $e_2=w_1w_2$ and $d_G(a_{j'}, e_2)=d_G(a_{j'}, w_1)=m_2$. This also implies that
$d_G(a_0,e_2)=d_G(a_0, w_1)$ and $d_G(a_j,e_2)=d_G(a_j, w_1)$.
Now, suppose that
\begin{equation}\label{equ-2}
d_G(a_0, e_1) = d_G(a_{0}, e_2)~~ \mathrm{and}~~d_G(a_j, e_1) = d_G(a_{j}. e_2)
\end{equation}
We will derive a contradiction or show that $e_1$ and $e_2$ can be distinguished by a vertex in $S\setminus (V(T_{a_0}) \cup V(T_{a_j}))$.

Case 1.  $e_1\in E(\mathcal{C})$.  Let $e_1=a_{i'}a_{i'+1}$, where $i'\in [0,\ell-1]$ and $a_{\ell}=a_0$. By $G\in \mathcal{O}_e(\ell,j)\cup \mathcal{E}_e(\ell,j)$ (Definition \ref{def-new2} (ii)),
 we have that $\lfloor\frac{\ell-1}{2}\rfloor<j'<\ell-\lfloor\frac{\ell-1}{2}\rfloor+j$ or $j' \in [1,j-1]$ (Note that $j'\notin \{0,j\}$). Moreover, when $j'>\lfloor\frac{\ell-1}{2}\rfloor$, it follows that $i'<j'$; otherwise, $d_G(a_0, e_1)=d_G(a_0, a_{i'+1}) < d_G(a_0, a_{j'}) \leq d_G(a_0, e_2)$.

Case 1.1. $j' \in [1,j-1]$.  Then, $d_G(a_0, e_2)=j'+ m_2$ and $d_G(a_j, e_2)=j-j'+ m_2$. If $i'\leq j$, then either $d_G(a_0, e_1) < d_G(a_0, e_2)$ (when $i'<j'$) or $d_G(a_j, e_1) < d_G(a_j, e_2)$ (when $i'\geq j'$), a contradiction to Equation (\ref{equ-2}).
If $j< i'\leq \lfloor\frac{\ell-1}{2}\rfloor$,  then $d_G(a_0, e_1)=i'$ and $d_G(a_j, e_1)=i'-j$. By Equation (\ref{equ-2}), we deduce that $j=j'$, a contradiction.
If $\lfloor\frac{\ell-1}{2}\rfloor+j <i'\leq \ell-1$, then  $d_G(a_0, e_1)=\ell-(i'+1)$ and $d_G(a_j, e_1)=\ell-(i'+1)$ +$j$ (Notice that in this case $i'$ does not exist when $\ell\equiv 1$ (mod 2) and $j=\frac{\ell-1}{2}$). By Equation (\ref{equ-2}), we deduce that $j'=0$, a contradiction.

Moving on now to consider $\lfloor\frac{\ell-1}{2}\rfloor< i'\leq \lfloor\frac{\ell-1}{2}\rfloor+j$, it follows that $d_G(a_0, e_1)=\ell-i'-1$ and $d_G(a_j, e_1)=i'-j$. Then, by Equation (\ref{equ-2}), we derive  $\ell=2i'+2j'-2j+1$ and $m_2= i'+j'-2j=\frac{\ell-1}{2}-j$.
This shows that there does exist a $j'' \in [1, j-1]$ for which $S\cap V(T_{a_{j''}})\neq \emptyset$, since if not, $T_{a_r}$ contains no branching vertex for every $r\in [1,j-1]$ while $T_{a_{j'}}$ has length at least $m_2+1 =\frac{\ell-1}{2}-j +1 $,  a contradiction to
$G\in \mathcal{O}_e(\ell,j)$ (Definition \ref{def-new2} (iii)).
Let $w\in S\cap V(T_{a_{j''}})\neq \emptyset$ and we will prove that
$e_1$ and $e_2$ can be distinguished by $w$.

For this, we construct a new graph $G'$ that is obtained from $G$ by subdividing  edge $e_1$ (i.e., delete $e_1$, add a new vertex, say $a'$, and join $a'$ to $a_{i'}$ and $a_{i'+1}$). Let $\ell'=\ell+1$ and consider $G'$. Clearly, $G'\in \mathcal{E}(\ell')$, $j\in [1, \frac{\ell'-2}{2}]$, and $S\in \mathcal{B}(G')$.
As regards  $\mathcal{C}(G')$,  a normal labeling  can be obtained based on  $\mathcal{C}(G)$ by relabelling $a'$ with $a_{i'+1}$ and  $a_{i}$ with $a_{i+1}$ for every  $i\in \{i'+1, \ldots, \ell-1\}$.
Notices that $j', j'' \in [1,j-1]$, $\frac{\ell'}{2}+1\leq  i'+1\leq \frac{\ell'}{2}+j$,  $d_G(a_0,w_2)=d_G(a_0,e_2)+1$, and  $d_G(a_j,w_2)=d_G(a_j,e_2)+1$. So,
$d_{G'}(a_0, a_{i'+1})=d_G(a_0, a_{i'})=d_G(a_0,e_1)+1=d_G(a_0,e_2)+1=d_G(a_0, w_2)$ and $d_{G'}(a_j, a_{i'+1})=d_G(a_j, a_{i'})+1 = d_G(a_j,e_1)+1=d_G(a_j,e_2)+1=d_G(a_j, w_2)$. Then, by Lemma \ref{add2}, $d_{G'} (w, a_{i'+1}) \neq d_{G'} (w, w_2)$. For graph $G$ (and also for $G'$), if $e_2$ is on $P_{w, a_{j'}}$, then clearly $d_G(w, e_2)<d_G(w,e_1)$. We therefore assume that $e_2$ is not on $P_{w, a_{j'}}$, which implies that $d_G(w, e_2) = d_G(w, w_1) =d_G(w, w_2)-1= d_{G'}(w,w_2)-1$. Additionally, when (in $G'$) $i'+1 - j'' \leq \frac{\ell'}{2}$, it holds that (in $G$) $i' - j'' \leq \frac{\ell-1}{2}$ and hence $d_G(w,e_1)= d_G(w, a_{i'}) = d_{G'}(w, a_{i'+1})-1$; when  (in $G'$) $i'+1 - j'' > \frac{\ell'}{2}$, it holds that (in $G$) $i' - j'' > \frac{\ell-1}{2}$ and hence $d_G(w,e_1)= d_G(w, a_{i'+1}) = d_{G'}(w, a_{i'+2})= d_{G'}(w, a_{i'+1})-1$. As a conclusion, we have that $d_G(w,e_1)=d_{G'}(w, a_{i'+1})-1 \neq d_{G'}(w, w_2)-1 = d_G(w,e_2)$.

Case 1.2. $\lfloor\frac{\ell-1}{2}\rfloor<j'<\ell-\lfloor\frac{\ell-1}{2}\rfloor+j$. Then, $i'<j' \leq  \ell-\lfloor\frac{\ell-1}{2}\rfloor+j-1$, and $d_G(a_0, e_2) = \ell-j' + m_2$ and $d_G(a_j, e_2) = j' -j + m_2$.
Furthermore, if $0\leq i' \leq  \lfloor\frac{\ell-1}{2}\rfloor$, then $d_G(a_0, e_1)=i'$, and $d_G(a_{j}, e_1)=i'-j$ (when $i'\geq j$) or  $d_G(a_{j}, e_1)=j-(i'+1)$ (when $i'<j$). By Equation (\ref{equ-2}), we derive $m_2=i'-j'<0$ (when $i'\geq j$) or $m_2= j-\frac{\ell+1}{2}<0$ (when $i'<j$), a contradiction.
If $\lfloor\frac{\ell-1}{2}\rfloor+1\leq i' <\ell-\lfloor\frac{\ell-1}{2}\rfloor+j-1$, then $i'-j<\lfloor\frac{\ell}{2}\rfloor$, and $d_G(a_0, e_1)=\ell- (i'+1)$ and  $d_G(a_{j}, e_1)=i'-j$. By Equation (\ref{equ-2}), we have that $2i'=2j'-1$, a contradiction.

Case 2. $e_1\notin E(\mathcal{C})$. Let $e_1\in T_{a_{i'}}$ for some $i'\in [0, \ell-1]\setminus \{0,j,j'\}$ and $d_G(a_{i'}, e_1)=m_1$. Without loss of generality, we assume that $i'<j'$. Then, by Equation (\ref{equ-2}) and Lemma \ref{add3}, one of the following conditions hold: (i) $j'\in [j+1, \lfloor\frac{\ell}{2}\rfloor]$, $i'>j$, and $m_1-m_2=j'-i'$; (ii) $j'\in [\lfloor\frac{\ell}{2}\rfloor+1, \lfloor\frac{\ell}{2}\rfloor+j]$, $i'<j$, $\ell=2i'+2j'-2j$, and $m_1-m_2=\ell-j'-i'$; (iii) $j'\in [\lfloor\frac{\ell}{2}\rfloor+j+1, \ell-1]$, $\ell=2(i'-j)$, and $m_1-m_2=i'-j'$; or  $i'> \lfloor\frac{\ell}{2}\rfloor+j$ and $m_1-m_2=i'-j'$.

For (i), by $G\in \mathcal{O}_e(\ell,j) \cup \mathcal{E}_e(\ell,j)$ (Definition \ref{def-new2} (ii)), we deduce that $m_1=0$ and $m_2=i'-j'<0$, a contradiction.
For (ii), we have that $\ell\equiv 0$ (mod 2), $j'=\frac{\ell}{2}+j-i'$, and $m_1=\frac{\ell}{2}-j+m_2 \geq \frac{\ell}{2}-j$ (which implies that the length of $T_{a_{i'}}$ is at least $m_1+1\geq \frac{\ell}{2}-j+1$).
 Therefore, by  $G\in \mathcal{E}_e(\ell,j)$ (Definition \ref{def-new2} (iv)),  there exists a vertex $k\in [1,j-1]$ such that $T_{a_k}$ contains a branching vertex. So, $S\cap V(T_{a_k})\neq \emptyset$, say $w\in S\cap V(T_{a_k})$. By Lemma \ref{add2}, $x$ and $y$ can be distinguished by $w$.
For (iii), we have that $j'\geq \lceil\frac{\ell+2}{2}\rceil+j$ and $|V(T_{a_{j'}})|>1$, a contradiction to $G\in \mathcal{E}(\ell,j) $ (Definition \ref{def-new2} (ii)).
\end{proof}

\begin{theorem} \label{two-2-1}
Suppose that $G$ is a unicyclic graph with $|\mathcal{A}(S)|\geq 2$, where  $S\in \mathcal{B}(G)$. Label $\mathcal{C}(G)$ normally and let $j = \max \{i: a_i \in \mathcal{A}(S)\}$. Then, $S$ is a metric generator iff there are three $S$-active vertices forming a geodesic triple or $G\in \mathcal{O}_e(\ell,j) \cup \mathcal{E}_e(\ell,j)$.
\end{theorem}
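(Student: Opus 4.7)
The plan is to follow the structure of the proof of Theorem~\ref{two} with adjustments dictated by the edge-metric versions of Definition~\ref{def-new2}. Sufficiency is immediate: if three $S$-active vertices form a geodesic triple, Lemma~\ref{geodesic-triple} gives that $S$ is an edge metric generator, and otherwise $G\in\mathcal{O}_e(\ell,j)\cup\mathcal{E}_e(\ell,j)$ combined with Lemma~\ref{ok-graph-1-0} yields the same conclusion.

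For necessity, suppose $S$ is an edge metric generator and no three $S$-active vertices form a geodesic triple. Because $\mathcal{C}$ is labelled normally, this forces $j\leq\lfloor\ell/2\rfloor$. I first rule out the boundary case $\ell\equiv 0\pmod 2$ with $j=\ell/2$: here one checks that $\mathcal{A}(S)=\{a_0,a_j\}$ is forced, and a direct distance computation around the cycle shows that the edges $a_{i-1}a_i$ and $a_{\ell-i}a_{\ell-i+1}$ (for a suitable $i\in[1,\ell/2-1]$) are equidistant from every vertex of $S$, contradicting $S$ being an edge metric generator. Hence $j\leq\lfloor(\ell-1)/2\rfloor$, and it remains to verify that $G$ satisfies every condition of Definition~\ref{def-new2}.

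Arguing by contradiction, assume some condition in Definition~\ref{def-new2} fails and exhibit a pair of edges indistinguishable by $S$. If (i) fails, say $a_0$ is bad, the minimality of $S\in\mathcal{B}(G)$ guarantees a thread attached to $a_0$ disjoint from $S$; the edge of this thread incident to $a_0$ and the cycle edge $a_{\ell-1}a_0$ have identical distances from every $s\in S\cap\bigcup_{0\le j'\le j}V(T_{a_{j'}})$. If (ii) fails for some $k$ in $\{\ell-(\lfloor(\ell-1)/2\rfloor-j),\dots,\ell-1\}\cup\{j+1,\dots,j+(\lfloor(\ell-1)/2\rfloor-j)\}$ with $|V(T_{a_k})|>1$, the edge of $T_{a_k}$ incident to $a_k$ can be paired with the cycle edge $a_{k-1}a_k$ or $a_ka_{k+1}$ on the opposite side of $\mathcal{C}$ to yield equal distances from every $s\in S$. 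If (iii) fails for odd $\ell$, or (iv) fails for even $\ell$, then $S$ meets no $T_{a_r}$ with $r\in[1,j-1]$, so $S\subseteq V(T_{a_0})\cup V(T_{a_j})$; the edge at the far endpoint of the overlong thread $T_{a_k}$ matches a symmetric edge at position $k'=\lfloor(\ell-1)/2\rfloor+j-k$ (resp.\ $k'=\ell/2+j-k$), producing a pair $\{e_1,e_2\}$ with $d_G(a_0,e_1)=d_G(a_0,e_2)$ and $d_G(a_j,e_1)=d_G(a_j,e_2)$, hence indistinguishable by $S$.

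The most delicate step is this last one, where the trivial-tree bound in Definition~\ref{def-new2} differs from its vertex analogue in Definition~\ref{def-new1} by exactly one, reflecting the $\pm 1$ discrepancy between edge distances and vertex distances along a thread. A case split on the parity of $\ell$, on the sign of $k-j$, and on whether the symmetric partner $a_{k'}$ lies on the $a_0$-side or the $a_j$-side of $\mathcal{C}$ is required; the construction of $e_2$ must be consistent with these choices so that the edge-analogue of Equation~(\ref{equ-2}) is forced to hold, delivering the desired contradiction.
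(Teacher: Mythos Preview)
Your proposal is correct and follows essentially the same route as the paper's proof: sufficiency via Lemmas~\ref{geodesic-triple} and~\ref{ok-graph-1-0}, then necessity by ruling out $j=\ell/2$ for even $\ell$ and producing, for each failed clause of Definition~\ref{def-new2}, an explicit pair of edges indistinguishable by $S$.

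One point worth sharpening: in your treatment of clauses (iii) and (iv) you describe the partner $e_2$ uniformly as ``a symmetric edge at position $k'$,'' but the paper's construction differs in an essential way between the two parities. For odd $\ell$ (clause (iii)) the partner is the \emph{cycle} edge $a_{k'}a_{k'+1}$ with $k'=\tfrac{\ell-1}{2}+j-k$, whereas for even $\ell$ (clause (iv)) the partner is the \emph{tree} edge $va_{k'}\in E(T_{a_{k'}})$ with $k'=\tfrac{\ell}{2}+j-k$; the very hypothesis $|V(T_{a_{k'}})|>1$ in (iv) is what guarantees this tree edge exists. This is the concrete manifestation of the $\pm 1$ shift you allude to, and it should be stated explicitly rather than folded into ``the construction of $e_2$ must be consistent.'' Also, your case split ``on the sign of $k-j$'' is superfluous, since $k\in[1,j-1]$ forces $k<j$ throughout; and the edge $e_1$ you need is the one at distance exactly $\lfloor\tfrac{\ell-1}{2}\rfloor-j$ (resp.\ $\tfrac{\ell}{2}-j$) from $a_k$, not literally the terminal edge of the thread.
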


\begin{proof}
The sufficiency follows from Lemmas \ref{geodesic-triple}, and \ref{ok-graph-1-0}. Suppose that $S$ is an edge metric generator, and there are not three $S$-active vertices forming a geodesic triple.  This shows that $j\leq \lfloor\frac{\ell}{2}\rfloor$, and  $\mathcal{A}(S)=\{a_0,a_j\}$ when $j=\lfloor\frac{\ell}{2}\rfloor$ and $G\in \mathcal{E}(\ell)$ (however, in this case $d_G(s, a_ia_{i+1})=d_G(s, a_{\ell-i-1}a_{\ell-i})$ for every $i\in[0, \frac{\ell-2}{2}]$ and every $s\in S$, where $a_{\ell}=a_0$, a contradiction).
In the below, we assume  that  $j\leq \lfloor\frac{\ell-1}{2}\rfloor$ and we will prove that $G\in \mathcal{O}(\ell,j)\cup \mathcal{E}(\ell,j)$. To the contrary, if $G\notin \mathcal{O}_e(0,j)$ (resp. $G\notin\mathcal{E}_e(0,j)$), then at least one of the following conditions (i), (ii), and (iii) (resp. (i), (ii), and (iv)) holds.

(i) $a_0$ or $a_j$ is a bad vertex;

 \vspace{0.01cm}

(ii) When $j<\lfloor\frac{\ell-1}{2}\rfloor$, there exists a $k\in (\{\ell-1, \ldots, \ell-(\lfloor\frac{\ell-1}{2}\rfloor-j)\}\cup \{j+1,\ldots, j+(\lfloor\frac{\ell-1}{2}\rfloor-j)$ such that $|V(T_{a_k})|>1$.

\vspace{0.01cm}

(iii) When $j>1$ and $T_{a_{k'}}$ contains no branching vertex for every $k'\in [1,j-1]$,  there exists a $k\in [1,j-1]$ such that $T_{a_{k}}$ is  a path of length at least $\frac{\ell+1}{2}-j$.

\vspace{0.01cm}

(iv) When $j>1$ and both $T_{a_{i}}$ and $T_{a_{i'}}$ contain no branching vertex for every $i\in [1,j-1]$ and $i'=\frac{\ell}{2}+j-i$, there exists a $k\in [1,j-1]$ and a $k'= \frac{\ell}{2}+j-k$ such that $|V(T_{a_{k'}})|>1$ and
$T_{a_{k}}$ has length at least $\frac{\ell}{2}-j+1$.

For (i), suppose $a_0$ is a bad vertex. By the selection of $S$, there is a thread $T'$ attached to $a_0$ which contains no vertex of $S$. Let $u\in V(T')$ be the vertex adjacent to $a_0$. Then,  $d_G(s,ua_0)=j'+d_G(s, a_{j'})=d_G(s, a_{\ell-1}a_0)$ for any  $s\in S\cap (\cup_{0\leq j'\leq j} V(T_{a_{j'}}))$. Therefore, $S$ can not distinguish  $ua_0$ and $a_{\ell-1}a_0$.

For (ii), consider the vertex $u\in V(T_{a_k})$ that is adjacent to $a_k$. If $k\in \{\ell-1, \ldots, \ell-(\lfloor\frac{\ell-1}{2}\rfloor-j)\}$, then $k-1\geq \lceil\frac{\ell-1}{2}\rceil+j$,  and hence $d(s, ua_k)=(\ell-k) + j'+ d_G(s,a_0)= d(s, a_{k-1}a_k)$ for any  $s\in S\cap (\cup_{0\leq j'\leq j} V(T_{a_{j'}}))$. Therefore, $S$ can not distinguish $ua_k$ and $a_{k-1}a_k$. If $k \in\{j+1,\ldots, j+(\lfloor\frac{\ell-1}{2}\rfloor-j)$, then  $d_G(s, ua_k)= d_G(s, a_k) = d_(s, a_ka_{k+1})$ for any  $s\in S\cap (\cup_{0\leq j'\leq j} V(T_{a_{j'}}))$. Therefore, $S$ can not distinguish $ua_k$ and $a_ka_{k+1}$.

For (iii), $\ell \equiv 1$ (mod 2) and $S$ contains only vertices in $V(T_{a_0})\cup V(T_{a_j})$.  Let $uu'\in E(T_{a_k})$ be the edge that has distance $\frac{\ell-1}{2}-j$ with $a_k$. Consider edge $a_{k'}a_{k'+1}$ where $k'=\frac{\ell-1}{2}+j-k$.
Observe that $k\in [1,j-1]$; we have  $k'=\frac{\ell-1}{2}-k+j >\frac{\ell-1}{2}$ and $k'-j<\frac{\ell-1}{2}$. Therefore, $d_G(s, a_{k'}a_{k'+1}) = \ell-(k'+1)+ d_G(a_0,s) = \frac{\ell-1}{2}-j+k+d_G(a_0,s)= d_G(s, uu')$  for any $s \in S\cap V(T_{a_0})$ and   $d_G(s', a_{k'}a_{k'+1} ) = k'-j + d_G(a_j, s') =(\frac{\ell-1}{2}-j)+(j-k)+ d_G(a_j, s')=d_G(s', uu')$  for any $s' \in S\cap V(T_{a_j})$. Therefore, $S$ can not distinguish $uu'$ and $a_{k'}a_{k'+1}$.

For (iv),  $\ell \equiv 0$ (mod 2) and $S$ contains only vertices in $V(T_{a_0})\cup V(T_{a_j})$.  Let $uu'\in E(T_{a_k})$ be the edge that has distance exactly $\frac{\ell}{2}-j$ with $a_k$, and $va_{k'}\in E(T_{a_{k'}})$, where $k'=\frac{\ell}{2}+j-k$.
Observe that $k\in [1,j-1]$. We have that $k'>\frac{\ell}{2}$ and $k'-j<\frac{\ell}{2}$. Therefore, $d_G(s, va_{k'}) = \ell-k'+ d_G(a_0,s) = (\frac{\ell}{2}-j)+k+d_G(a_0,s)= d_G(s, uu')$  for any $s \in S\cap V(T_{a_0})$ and   $d_G(s', va_{k'}) = k'-j + d_G(a_j, s') =(\frac{\ell}{2}-j)+(j-k)+ d_G(a_j, s')=d_G(s', uu')$  for any $s' \in S\cap V(T_{a_j})$. Therefore, $S$ can not distinguish $uu'$ and $va_{k'}$.
\end{proof}


By Theorem \ref{two-2-1} and with an analogous argument as that for Corollary \ref{th1},  we obtain the following result.
\begin{corollary} \label{th2}
Let $G$ be a unicyclic graph and $S\in \mathcal{B}(G)$. Label $\mathcal{C}(G)$ normally when $|\mathcal{A}(S)|\geq 2$ and let $j = \max \{i: a_i \in \mathcal{A}(S)\}$.  
Then,

\vspace{0.1cm}
(1)  When $\mathcal{A}(S)=\emptyset$,  if there are two vertices on $\mathcal{C}$, say  $a_{0}$ and $a_{j}$, such that $G\in \mathcal{O}_e(\ell,j)\cup  \mathcal{E}_e(\ell,j)$, then edim($G$)=2; otherwise edim($G$)=3.
\vspace{0.1cm}

(2) When $|\mathcal{A}(S)|=1$, let $\mathcal{A}(S)=\{a_k\}$. If there exists a vertex $a_{k'}\in V(\mathcal{C})\setminus \{a_k\}$  such that $G\in \mathcal{O}_e(\ell,k')\cup \mathcal{E}_e(\ell,k')$ (let $k=0$) or $G\in \mathcal{O}_e(\ell,k)\cup \mathcal{E}_e(\ell,k)$ (let $k'=0$), then  edim(G)=$L(G)+1$; otherwise, edim(G)=$L(G)$+2.
\vspace{0.1cm}

(3) When $|\mathcal{A}(S)|\geq 2$,  if there are three $S$-active vertices forming a geodesic triple or  $G\in \mathcal{O}_e(0,j)\cup  \mathcal{E}_e(0,j)$, then edim(G)=$L(G)$; otherwise, edim(G)=$L(G)+1$.

\end{corollary}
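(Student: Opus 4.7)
The plan is to proceed by the same template as Corollary \ref{th1}, simply with Theorem \ref{two-2-1} in place of Theorem \ref{two} and with $\mathcal{O}_e(\ell,j)\cup\mathcal{E}_e(\ell,j)$ in place of $\mathcal{O}(\ell,j)\cup\mathcal{E}(\ell,j)$. The starting observation is that Lemma \ref{necessary-0} forces every edge metric generator $S^{\ast}$ to be a branch-resolving set with $|\mathcal{A}(S^{\ast})|\geq 2$, so $\mathrm{edim}(G)\geq L(G)$, and moreover at least $\max\{0,2-|\mathcal{A}(S)|\}$ additional vertices (with fresh masters) must be adjoined to any $S\in \mathcal{B}(G)$ before it can become an edge metric generator. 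For the matching upper bounds, I would use the elementary fact that any two distinct cycle vertices $a_p,a_q$ admit a third $a_r\in V(\mathcal{C})$ with $\{a_p,a_q,a_r\}$ forming a geodesic triple; combined with Lemma \ref{geodesic-triple}, this yields $\mathrm{edim}(G)\leq L(G)+1$ when $|\mathcal{A}(S)|\geq 2$, $\mathrm{edim}(G)\leq L(G)+2$ when $|\mathcal{A}(S)|=1$, and $\mathrm{edim}(G)\leq 3$ when $\mathcal{A}(S)=\emptyset$.

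The three statements of the corollary then follow from a case split on $|\mathcal{A}(S)|$. In Case (3), Theorem \ref{two-2-1} applies directly to $S$: if either a geodesic triple exists among $S$-active vertices or $G\in \mathcal{O}_e(\ell,j)\cup\mathcal{E}_e(\ell,j)$ then $S$ itself is an edge metric generator, giving $\mathrm{edim}(G)=L(G)$; otherwise we pick a third cycle vertex that closes a geodesic triple with two $S$-active cycle vertices to obtain an edge metric generator of size $L(G)+1$. In Case (1), $S=\emptyset$ and $L(G)=0$, so the smallest candidate has the form $\{a_0,a_j\}$; Theorem \ref{two-2-1} (with no geodesic-triple option possible on two vertices) gives that this pair resolves all edges iff $G\in \mathcal{O}_e(\ell,j)\cup\mathcal{E}_e(\ell,j)$ for some $j\in [1,\lfloor(\ell-1)/2\rfloor]$; if no such pair exists, a third cycle vertex completes a geodesic triple and gives $\mathrm{edim}(G)=3$. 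In Case (2), $|\mathcal{A}(S)|=1$ and the cheapest augmentation adds a single vertex with a different master, which I would take (w.l.o.g., see below) to be some cycle vertex $a_{k'}\neq a_k$; Theorem \ref{two-2-1} then says $S\cup\{a_{k'}\}$ is an edge metric generator iff $G\in \mathcal{O}_e(\ell,j)\cup\mathcal{E}_e(\ell,j)$ for the appropriate normal labeling, and the two options ``let $k=0$'' and ``let $k'=0$'' in the statement simply encode the two ways of designating the base point so that $j\in[1,\lfloor(\ell-1)/2\rfloor]$. If no such $a_{k'}$ works, then two additional cycle vertices are needed to build a geodesic triple with $a_k$, giving $\mathrm{edim}(G)=L(G)+2$.

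The main obstacle is the reduction in Case (2) to a cycle augmentation: one must rule out the possibility that some strictly cheaper edge metric generator of size $L(G)+1$ uses a vertex strictly inside a pendant tree $T_{a_{k'}}$ rather than the cycle vertex $a_{k'}$ itself. I would resolve this by noting that the hypotheses of Theorem \ref{two-2-1} depend only on the set $\mathcal{A}$ of active cycle vertices, so replacing any added vertex outside $T_{a_k}$ by its master preserves both the size and the applicability of the theorem. After this reduction, plus the straightforward observation that the normal labeling can always be chosen to place one of $\{a_k,a_{k'}\}$ at position $0$ and the other at position $j\in[1,\lfloor(\ell-1)/2\rfloor]$, all three cases of the corollary fall out cleanly from the dichotomy of Theorem \ref{two-2-1}.
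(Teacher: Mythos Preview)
Your proposal is correct and follows essentially the same approach as the paper. The paper's own proof is a single sentence—``By Theorem~\ref{two-2-1} and with an analogous argument as that for Corollary~\ref{th1}''—and your write-up is precisely that analogous argument spelled out: lower bounds via Lemma~\ref{necessary-0}, upper bounds via the geodesic-triple construction and Lemma~\ref{geodesic-triple}, and the dichotomy from Theorem~\ref{two-2-1} to pin down the exact value in each case. Your treatment of the Case~(2) reduction (replacing an added tree vertex by its master, since only $\mathcal{A}$ matters for Theorem~\ref{two-2-1}) is more explicit than anything the paper writes, but it is exactly the kind of detail the phrase ``analogous argument'' is meant to cover.
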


\section{Conclusion}

According to Definitions \ref{def-new1} and \ref{def-new2}, we see that $\mathcal{O}_e(\ell,j) \subset \mathcal{O}(\ell,j)$ and  $\mathcal{E}(\ell,j) \subset \mathcal{E}_e(\ell,j)$.
Therefore, by Corollaries \ref{th1} and \ref{th2}, we can determine the relation between $\mathrm{dim}(G)$ and  $\mathrm{edim}(G)$ for unicyclic graphs $G$ (see Table 1), and give an answer to Problems \ref{pro1} and $\ref{pro2}$.   Indeed, by Table 1, we can generate an efficient algorithm to compute the value of $\mathrm{dim}(G)$, $\mathrm{edim}(G)$, and $\mathrm{dim}(G)-\mathrm{edim}(G)$. In addition,
our method can also be used to discuss  metric and edge metric dimensions for cactus graphs, which will be examined in our future work.

\begin{table}\label{tab-1} \caption{The relation between $\mathrm{dim}(G)$ and  $\mathrm{edim}(G)$ for unicyclic graphs $G$.
When $\mathcal{A}(S)$ does not contain three vertices that form a geodesic triple, we make the following assumptions: if $|\mathcal{A}(S)|\geq 2$, $\mathcal{C}$ is labelled normally (which means that $a_0\in \mathcal{A}(S)$) and let $j=\max\{i| a_i$ is $S$-active\}; if $|\mathcal{A}(S)|=1$, let $\mathcal{A}(S)=\{a_k\}, k\in [0,\ell-1]$.  Also, we denote by $R_1:$ $\mathcal{C}$ contains two vertices, say $a_0$ and $a_k$, such that $G\in \mathcal{O}_e(\ell,k)$; $R_2:$ there is a $a_{k'}\in V(\mathcal{C})\setminus \{a_k\}$ such that
 $G\in \mathcal{O}_e(\ell,k')$ (when let $k=0$) or  $G\in \mathcal{O}_e(\ell,k)$ (when let $k'=0$);
   $R_3:$  $G\in \mathcal{O}_e(\ell,j)$ or $G\notin \mathcal{O}(\ell,j)$;  $R_4:$  there are two vertices on $\mathcal{C}$ (say $a_0$ and $a_k$) such that $G\in \mathcal{E}(\ell,k)$, or for every two vertices (also say  $a_0$ and $a_k$) on $\mathcal{C}$, $G\notin \mathcal{E}_e(\ell,k)$;  $R_5:$  there exists a $a_{k'}\in V(\mathcal{C})\setminus \{a_k\}$ such that $G\in \mathcal{E}(\ell,k)$ when let $k'=0$ (or $G\in \mathcal{E}(\ell,k')$ when let $k=0$),  or for every $a_{k'}\in V(\mathcal{C})\setminus \{a_k\}$) $G\notin \mathcal{E}_e(\ell,k)$ (when let $k'=0$) and $G\notin \mathcal{E}_e(\ell,k')$ (when let $k'=0$);
 $R_6:$  $G\in \mathcal{E}(\ell,j)$ or  $G\notin \mathcal{E}_e(\ell,j)$;
 and $\neg R_i$ denotes the negative  of $R_i$ for $i\in [1,6]$.
 }
\begin{center}
\small{
\begin{tabular}{|*{7}{c|}}
\hline
\multicolumn{4}{|m{5cm}<{\centering}|}{\shortstack{\\ A unicyclic graph $G$ and $S\in \mathcal{B}(G)$}} & $\mathrm{dim}(G)$&
$\mathrm{edim}(G)$ & $\mathrm{dim}(G)$-$\mathrm{edim}(G)$\\
\hline
\multicolumn{4}{|m{5cm}<{\centering}|}{\shortstack{ \\$\mathcal{A}(S)$ contains three vertices\\ forming a geodesic triple}}  & $L(G)$&
$L(G)$ & $0$\\
\hline
\multirow{12}{*}{\shortstack{ $\mathcal{A}(S)$\\ contains\\ no \\three \\ vertices\\ forming\\ a \\ geodesic\\ triple}} & \multirow{6}{*}{ \shortstack{$\ell\equiv 1$ \\ (mod 2) }} &  \multirow{2}{*}{ $|\mathcal{A}(S)|=0$ } & {$R_1$}& 2& 2& 0\\
\cline{4-7}
 & &   & $\neg R_1$& 2& 3& -1\\
 \cline{3-7}
 &  &  \multirow{2}{*}{ $|\mathcal{A}(S)|=1$ } & $R_2$& $L(G)+1$& $L(G)+1$& 0\\
  \cline{4-7}
 &  &   & $\neg R_2$& $L(G)+1$& $L(G)+2$& -1\\

  \cline{3-7}
 &  &  \multirow{2}{*}{ $|\mathcal{A}(S)|=2$ } & $R_3$ & $L(G)$ (or L(G)+1)& $L(G)$ (or $L(G)$+1)& 0\\
  \cline{4-7}
 &  &   & $\neg R_3$  & $L(G)$ & $L(G)$+1& -1\\

  \cline{2-7}
  & \multirow{6}{*}{ \shortstack{ $\ell\equiv 0$\\ (mod 2) }} &  \multirow{2}{*}{ $|\mathcal{A}(S)|=0$ } & $R_4$& 2 (or 3)& 2 (or 3)& 0\\
\cline{4-7}
 & &   & $\neg R_4$& 3& 2& 1\\
 \cline{3-7}
 &  &  \multirow{2}{*}{ $|\mathcal{A}(S)|=1$ } & $R_5$& $L(G)+1$ (or $L(G)$+2)& $L(G)+1$ (or $L(G)$+2) & 0\\
  \cline{4-7}
 &  &   & $\neg R_5$& $L(G)+2$& $L(G)+1$& 1\\

  \cline{3-7}
 &  &  \multirow{2}{*}{ $|\mathcal{A}(S)|=2$ } & $R_6$ & $L(G)$ (or L(G)+1)& $L(G)$ (or $L(G)$+1)& 0\\
  \cline{4-7}
 &  &   & $\neg R_6$ & $L(G)+1$ & $L(G)$& 1\\

\hline

\end{tabular}
}
\end{center}
\end{table}

\section{Acknowledgements}

This work was supported in part by  National Natural Science Foundation of China  under Grant 61872101; in part by Natural Science Foundation of Guangdong Province of China under Grant 2021A1515011940.



 \bibliographystyle{elsarticle-num}
  \bibliography{mybib}

\begin{thebibliography}{10}
\expandafter\ifx\csname url\endcsname\relax
  \def\url#1{\texttt{#1}}\fi
\expandafter\ifx\csname urlprefix\endcsname\relax\def\urlprefix{URL }\fi
\expandafter\ifx\csname href\endcsname\relax
  \def\href#1#2{#2} \def\path#1{#1}\fi

\bibitem{slater1975leaves}
P.~J. Slater, Leaves of trees, Congr. Numer 14 (1975) 549--559.

\bibitem{Harary1976}
F.~Harary, R.~A. Melter, On the metric dimension of a graph, Ars Combin. 2
  (1976) 191--195.

\bibitem{melter1984metric}
R.~A. Melter, I.~Tomescu, Metric bases in digital geometry, Computer vision,
  graphics, and image Processing 25~(1) (1984) 113--121.

\bibitem{khuller1996landmarks}
S.~Khuller, B.~Raghavachari, A.~Rosenfeld, Landmarks in graphs, Discrete
  applied mathematics 70~(3) (1996) 217--229.

\bibitem{caceres2007metric}
J.~C{\'a}ceres, C.~Hernando, M.~Mora, I.~M. Pelayo, M.~L. Puertas, C.~Seara,
  D.~R. Wood, On the metric dimension of cartesian products of graphs, SIAM
  journal on discrete mathematics 21~(2) (2007) 423--441.

\bibitem{diaz2017complexity}
J.~Diaz, O.~Pottonen, M.~Serna, E.~J. van Leeuwen, Complexity of metric
  dimension on planar graphs, Journal of Computer and System Sciences 83~(1)
  (2017) 132--158.

\bibitem{foucaud2017identification}
F.~Foucaud, G.~B. Mertzios, R.~Naserasr, A.~Parreau, P.~Valicov,
  Identification, location-domination and metric dimension on interval and
  permutation graphs. ii. algorithms and complexity, Algorithmica 78~(3) (2017)
  914--944.

\bibitem{kelenc2018uniquely}
A.~Kelenc, N.~Tratnik, I.~G. Yero, Uniquely identifying the edges of a graph:
  the edge metric dimension, Discrete Applied Mathematics 251 (2018) 204--220.

\bibitem{zubrilina2018edge}
N.~Zubrilina, On the edge dimension of a graph, Discrete Mathematics 341~(7)
  (2018) 2083--2088.

\bibitem{zhu2019graphs}
E.~Zhu, A.~Taranenko, Z.~Shao, J.~Xu, On graphs with the maximum edge metric
  dimension, Discrete Applied Mathematics 257 (2019) 317--324.

\bibitem{filipovic2019edge}
V.~Filipovi{\'c}, A.~Kartelj, J.~Kratica, Edge metric dimension of some
  generalized petersen graphs, Results in Mathematics 74~(4) (2019) 1--15.

\bibitem{peterin2020edge}
I.~Peterin, I.~G. Yero, Edge metric dimension of some graph operations,
  Bulletin of the Malaysian Mathematical Sciences Society 43~(3) (2020)
  2465--2477.

\bibitem{zhang2020edge}
Y.~Zhang, S.~Gao, On the edge metric dimension of convex polytopes and its
  related graphs, Journal of Combinatorial Optimization 39~(2) (2020) 334--350.

\bibitem{knor2021graphs}
M.~Knor, S.~Majstorovi{\'c}, A.~T.~M. Toshi, R.~{\v{S}}krekovski, I.~G. Yero,
  Graphs with the edge metric dimension smaller than the metric dimension,
  Applied Mathematics and Computation 401 (2021) 126076.

\bibitem{sedlar2021}
J.~Sedlar, R.~{\v{S}}krekovski, Bounds on metric dimensions of graphs with edge
  disjoint cycles, Applied Mathematics and Computation 396 (2021) 125908.

\end{thebibliography}





\end{document}